\documentclass[11pt, reqno,oneside]{amsart}   	
\usepackage{geometry}                		
\geometry{letterpaper}                   		
\usepackage{graphicx}				
\usepackage{color}								
\usepackage{amssymb}
\usepackage{amsthm}
\usepackage{amsmath}
\usepackage{hyperref}
\usepackage{mathrsfs}
\usepackage{mathtools}
\usepackage[mathscr]{eucal}
\usepackage[T1]{fontenc}
\usepackage{hyperref}
\usepackage{url}
\usepackage{breakurl}

\newcommand{\bburl}[1]{\textcolor{blue}{\url{#1}}}

\allowdisplaybreaks
\newcommand{\Bb}{\mathscr{B}}

\newcommand{\lp}{\left(}
\newcommand{\rp}{\right)}
\newcommand{\A}{\mathscr{A}}
\newcommand{\pez}[1]{\left( #1\right)}
\newcommand{\abs}[1]{\left|#1\right|}
\newcommand{\Hp}{\mathfrak{H}}
\newcommand{\R}{\mathbf{R}}
\newcommand{\C}{\mathbf{C}}

\newcommand{\sP}{\mathscr{P}}
\newcommand{\overbar}[1]{\mkern 1.5mu\overline{\mkern-1.5mu#1\mkern-1.5mu}\mkern 1.5mu}

\newcommand{\kkot}[1]{ \frac{\sin \pi {#1} }{\pi {#1} } }
\newcommand{\foh}{\frac12}

\newcommand{\eeqno}{\tag{\stepcounter{equation}\theequation}}

\DeclareMathOperator{\supp}{supp}
\newcommand{\oo}{\infty}
\newcommand{\ra}{\rightarrow}
\newcommand{\vep}{\varepsilon}
\renewcommand{\geq}{\geqslant}
\renewcommand{\leq}{\leqslant}
\renewcommand{\ge}{\geqslant}
\renewcommand{\le}{\leqslant}

\makeatletter
\newtheorem*{rep@theorem}{\rep@title}
\newcommand{\newreptheorem}[2]{%
\newenvironment{rep#1}[1]{%
 \def\rep@title{#2 \ref{##1}}%
 \begin{rep@theorem}}%
 {\end{rep@theorem}}}
\makeatother

\theoremstyle{plain}
\newtheorem{theorem}{Theorem}[section]
\newreptheorem{theorem}{Theorem}
\newtheorem{lemma}[theorem]{Lemma}
\newreptheorem{lemma}{Lemma}

\newtheorem{proposition}[theorem]{Proposition}
\theoremstyle{definition}
\newtheorem{definition}[theorem]{Definition}
\theoremstyle{remark}
\newtheorem{remark}[theorem]{Remark}
\theoremstyle{plain}

\newcommand{\reff}[1]{\hyperref[#1]{\ref{#1}}}

\numberwithin{equation}{section}

\title{One-Level density for holomorphic cusp forms of arbitrary level}

\author[O. Barrett]{Owen Barrett}
\address{Department of Mathematics, University of Chicago, Chicago, IL, 60637, USA}
\email{barrett@math.uchicago.edu}

\author[P. Burkhardt]{Paula Burkhardt}
\address{Department of Mathematics, University of California, Berkeley, Berkeley, CA, 94720, USA}
\email{paulab@math.berkeley.edu, peb02012@pomona.edu}

\author[J. DeWitt]{Jonathan DeWitt}
\address{Department of Mathematics and Statistics, Haverford College, Haverford, PA 19041, USA}
\email{jdewitt@haverford.edu, jon.dewitt@gmail.com}

\author[R. Dorward]{Robert Dorward}
\address{Department of Mathematics, Oberlin College, Oberlin, OH 44074, USA}
\email{rdorward@oberlin.edu, bobbydorward@gmail.com}

\author[S.\,J. Miller]{Steven J. Miller}
\address{Department of Mathematics \& Statistics, Williams College, Williamstown, MA 01267, USA}
\email{sjm1@williams.edu, Steven.Miller.MC.96@aya.yale.edu}

\date{\today}							

\subjclass[2010]{11M26 (primary), 11M41, 15A52 (secondary).}

\keywords{Low lying zeroes, one level density, cuspidal newforms, Petersson formula.}

\thanks{The first four named authors were supported by NSF grant DMS1347804  and Williams College; the fifth-named author was partially supported by NSF grants DMS0850577 and DMS1561945. We thank Jim Cogdell for discussions on the signs of functional equation, Djorde Mili\'cevi\'c and Valentin Blomer for conversations on their work, and the referees and editor for many helpful comments and observations.}

\date{\today}

\begin{document}
\maketitle

\begin{abstract}
In 2000 Iwaniec, Luo, and Sarnak proved for certain families of $L$-functions associated to holomorphic newforms of square-free level that, under the Generalized Riemann Hypothesis, as the conductors tend to infinity the one-level density of their zeros matches the one-level density of eigenvalues of large random matrices from certain classical compact groups in the appropriate scaling limit. We remove the  square-free restriction by obtaining a trace formula for arbitrary level by using a basis developed by Blomer and Mili\'cevi\'c, which is of use for other problems as well.
\end{abstract}

\section{Introduction}

Montgomery \cite{Montgomery} conjectured that the pair correlation of
critical zeros up to height $T$ of the Riemann zeta function $\zeta(s)$ coincides with the
pair correlation of eigenvalues of random unitary matrices of dimension $N$ in the appropriate
limit as $T,N\ra\oo$. This remarkable connection initiated a new branch of number theory concerned with
relating the statistics of zeros of $\zeta(s)$, and of $L$-functions more generally,
to those of eigenvalues of random matrices. While additional support for this agreement was obtained by the work of Hejhal \cite{Hej} on the triple correlation of $\zeta(s)$, Rudnick and Sarnak \cite{RS} on the $n$-level correlation for cuspidal automorphic forms, and Odlyzko \cite{Od1, Od2} on the spacings between adjacent zeros of $\zeta(s)$, the story cannot end here as these statistics are insensitive to the behavior of any finite set of zeros. As the zeros at and near the central point play an important role in a variety of problems, this led Katz and Sarnak \cite{KS1,KS2} to develop a new statistic which captures this behavior.

\begin{definition} Let $L(s,f)$ be an $L$-function with zeros in the critical strip $\rho_f = 1/2 + i \gamma_f$ (note $\gamma_f \in \mathbb{R}$ if and only if the Generalized Riemann Hypothesis holds for $f$), and let $\phi$ be an even Schwartz function whose Fourier transform has compact support. The \textbf{one-level density} is \begin{equation}\label{one level density} D_1(f;\phi)\ := \ \sum_{\rho_f} \phi\left(\frac{\gamma_f}{2\pi}\log c_f\right),\end{equation} where $c_f$ is the analytic conductor.
\end{definition}

Their Density Conjecture \cite{KS1, KS2} states that the scaling limits of eigenvalues of classical compact groups near 1 correctly model the behavior of these zeros a family $\mathcal{F}$ of $L$-functions as the conductors tend to infinity. Specifically, let $\mathcal{F}_N$ be a sub-family of $\mathcal{F}$ with suitably restricted conductors; often one takes all forms of conductor $N$, or conductor at most $N$, or conductor in the range $[N, 2N]$. If the symmetry group is $\mathcal{G}$, then we expect \begin{equation} \mathcal{D}_1(\mathcal{F};\phi) \ := \ \lim_{N\to\infty} \frac1{|\mathcal{F}_N|} \sum_{f\in \mathcal{F}_N} D_1(f;\phi) \ = \ \int_{-\infty}^\infty \phi(x) W_1(\mathcal{G})(x) dx  \ = \ \int_{-\infty}^\infty \widehat{\phi}(t) \widehat{W}_1(\mathcal{G})(t) dt, \end{equation} where $K(y) = \kkot{y}$, $ K_\epsilon(x,y) = K(x-y) + \epsilon K(x+y)$ for $\epsilon = 0, \pm 1$, and
\begin{eqnarray}\label{eqdensitykernels}
W_1(\mathrm{SO(even)})(x) &\ =\ &  K_1(x,x) \nonumber\\ W_1(\mathrm{SO(odd)})(x) & = & K_{-1}(x,x)  + \delta_0(x) \nonumber\\ W_1(\mathrm{O})(x) & =
& \foh W_1(\mathrm{SO(even)})(x) + \foh W_1(\mathrm{SO(odd)})(x) \nonumber\\ W_1(\mathrm{U})(x) & =
& K_0(x,x) \nonumber\\ W_1(\mathrm{Sp})(x) &=&  K_{-1}(x,x).
\end{eqnarray}

While the Fourier transforms of the densities of the orthogonal groups all equal $\delta_0(y) + 1/2$ in $(-1,1)$, they are mutually distinguishable for larger support (and are distinguishable from the unitary and symplectic cases for any support). There is now an enormous body of work showing the 1-level densities of many families (such as Dirichlet $L$-functions, elliptic curves, cuspidal newforms, Maass forms, number field $L$-functions, and symmetric powers of ${\rm GL}_2$ automorphic representations) agree with the scaling limits of a random matrix ensemble; see \cite{AAILMZ,AM,DM1,FiMi,FI,Gao,GK,Gu,HM,HR,ILS,KS1,KS2,Mil,MilPe,OS1,OS2,RR,Ro,Rub1,Rub2,ShTe,Ya,Yo} for some examples, and \cite{DM1,DM2,ShTe} for discussions on how to determine the underlying symmetry. For additional readings on connections between random matrix theory, nuclear physics and number theory see \cite{BFMT-B, Con, CFKRS, FM, For, KeSn1, KeSn2, KeSn3, Meh}




We concentrate on extending the results of Iwaniec, Luo, and Sarnak in \cite{ILS}. One of their key results is a formula for unweighted
sums of Fourier coefficients of holomorphic newforms of a given weight and level. This formula writes the unweighted sums in terms of weighted sums to which one can apply the Petersson trace formula; it is instrumental in performing any averaging over
holomorphic newforms, since one can interchange summation and replace the average of
Fourier coefficients with Kloosterman sums and Bessel functions, which are amenable to analysis.

A drawback of their formula is that it may only be applied
to averages of newforms of square-free level. One reason is that the development of
such a formula depends essentially on the construction of an explicit orthonormal
basis for the space of cusp forms of a given weight and level, which they only computed in the case of square-free level.
In 2011, Rouymi~\cite{Rouymi}
complemented the square-free calculations of Iwaniec, Luo, and Sarnak, finding an
orthonormal
basis for the space of cusp forms of prime power level, and applying this explicit
basis towards the development of a similar sum of Fourier coefficients over all
newforms with level equal to a fixed prime power.

In 2015, Blomer and Mili\'cevi\'c~\cite{BM} extended the results of Iwaniec, Luo,
and Sarnak and Rouymi by writing down an explicit orthonormal basis for the space of
cusp forms (holomorphic or Maass) of a fixed weight and, novelly, arbitrary level.

The purpose of this article is, first, to leverage the basis of
Blomer and Mili\'cevi\'c to prove an exact formula for sums of Fourier coefficients
of holomorphic newforms over all newforms of a given weight and level, where now the
level is permitted to be arbitrary (see below, as well as Proposition \ref{prop:peterssonondelta} for a detailed expansion). The basis of Blomer and Mili\'cevi\'c requires one to split over the square-free and square-full parts of the level; this splitting combined with the loss of several simplifying assumptions for Hecke eigenvalues and arithmetic functions makes the case where the level is not square-free much more complex. As an application, we use this formula to show the 1-level density agrees only with orthogonal symmetry.


\subsection{Harmonic averaging}\label{sec:harmave}

Throughout we assume that $k,N \geq 1$ with $k$ even. By $H_k^\star(N)$ we always mean a basis of arithmetically normalized Hecke eigenforms in the space orthogonal to oldforms. Explicitly, it is a basis of holomorphic cusp forms of weight $k$ and level $N$ which are new of level
$N$ in the sense of Atkin and Lehner~\cite{AL} and whose elements are eigenvalues of the Hecke operators $T_n$ with $(n,N) = 1$ and normalized so that the first Fourier coefficient is 1. We let $\lambda_f(n)$ denote the $n$\textsuperscript{th} Fourier coefficient of an $f \in H_k^\star(N)$ (see the next section for more details).

For any holomorphic cuspidal newform $f$, we introduce the renormalized Fourier coefficients
\begin{equation}\label{eq:psidef}
  \Psi_f(n) \ := \
  \left(\frac{\Gamma(k-1)}{(4\pi)^{k-1}}\right)^{1/2}||f||_N^{-1}\lambda_f(n),
\end{equation}
where $\|f\|_N^2=\langle f,f\rangle_N$ and $\langle \cdot, \cdot, \rangle_N$ denotes the Petersson inner product (see, for instance, \cite[(14.11)]{IK}).
We then define
\begin{equation}\label{deltapsi}
  \Delta_{k,N}(m,n) \ := \  \sum_{g \in \Bb_k(N)}\overbar{\Psi_g(m)}\Psi_g(n),
\end{equation}
where $\Bb_k(N)$ is an orthonormal basis for the space of cusp forms of weight $k$
and level $N$. The importance of $\Delta_{k,N}(m,n)$ is clarified by the
introduction of the Petersson formula in the next section.

Using the orthonormal basis $\Bb_k(N)$ of Mili\'cevi\'c and Blomer, we then prove
the following (unconditional) formula.
\begin{theorem}\label{thm:puresum}
Suppose that $(n,N)=1$. Then
\begin{equation}\label{eq:puresumeqn}
\sum_{f\in H_{k}^\star(N)}\lambda_f(n)
\ = \ \frac{k-1}{12}\sum_{LM=N}\mu(L)M\prod_{p^2\mid M}\lp\frac{p^2}{p^2-1}\rp^{-1}
\sum_{(m,M)=1}m^{-1}\Delta_{k,M}(m^2,n).
\end{equation}
\end{theorem}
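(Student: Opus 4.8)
The plan is to expand the right-hand side of \eqref{eq:puresumeqn} using the explicit orthonormal basis $\Bb_k(M)$ of Blomer and Mili\'cevi\'c \cite{BM}, regroup the resulting expression by the level of the underlying newform, and show that the M\"obius sum over $LM=N$ annihilates every newform of level properly dividing $N$ while normalizing the level-$N$ newforms to coefficient exactly $1$. Recall the Atkin--Lehner decomposition $S_k(M)=\bigoplus_{d\mid M}\bigoplus_{\ell\mid M/d}\iota_\ell S_k^{\mathrm{new}}(d)$, with $\iota_\ell\colon g(z)\mapsto g(\ell z)$, and that $\Bb_k(M)$ is obtained by orthonormalizing, for each newform $g\in H_k^\star(d)$ with $d\mid M$, the old-class $\{\iota_\ell g:\ell\mid M/d\}$, the coefficients being given in \cite{BM} explicitly in terms of $\lambda_g(p)$ for the primes $p\mid M/d$. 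Since $(n,N)=1$ forces $p\nmid n$ and $(m,M)=1$ forces $p\nmid m$ for every $p\mid M$, only the copy $\iota_1 g=g$ contributes to $\Psi_{g'}(m^2)$ and to $\Psi_{g'}(n)$ for each basis vector $g'$ lying in the $g$-old-class; collecting these contributions gives
\begin{equation*}
\Delta_{k,M}(m^2,n)\ =\ \sum_{d\mid M}\ \sum_{g\in H_k^\star(d)}\omega_g\,\lambda_g(m^2)\,\lambda_g(n)\,\eta_d(M;g),
\end{equation*}
where $\omega_g=\tfrac{\Gamma(k-1)}{(4\pi)^{k-1}}\|g\|_d^{-2}$ and $\eta_d(M;g)$ is an index-dependent arithmetic factor coming from the inverse Gram matrix of the old-class; it is multiplicative in the prime factorization of $M/d$, satisfies $\eta_d(d;g)=1$, and depends at a prime $p$ only on $\lambda_g(p)$ and $v_p(d)$. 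For $M$ squarefree and $m=1$ this is the computation in \cite{ILS}, and for $M$ a prime power that of \cite{Rouymi}.

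Substituting this into \eqref{eq:puresumeqn} and collecting the coefficient of $\lambda_g(n)$ for a fixed newform $g\in H_k^\star(d)$, $d\mid N$ --- writing $M=dM'$ with $M'\mid N/d$ and $L=(N/d)/M'$ --- that coefficient is
\begin{equation*}
\frac{k-1}{12}\,\omega_g\sum_{M'\mid N/d}\mu\!\left(\frac{N/d}{M'}\right) dM'\prod_{p^2\mid dM'}\!\Big(\tfrac{p^2}{p^2-1}\Big)^{-1}\eta_d(dM';g)\sum_{(m,dM')=1}\frac{\lambda_g(m^2)}{m}.
\end{equation*}
The inner $m$-sum factors as an Euler product and equals $\zeta(2)^{-1}L(1,\mathrm{sym}^2 g)$ times a finite Euler product over the primes dividing $M$; the classical Rankin--Selberg evaluation of the Petersson norm shows, in turn, that $\omega_g$ equals $\zeta(2)L(1,\mathrm{sym}^2 g)^{-1}$ times local factors supported at the primes dividing $d$ times an explicit rational multiple depending only on $k$ and $d$ --- whose $k$-dependence is precisely cancelled by the factor $(k-1)/12$ in \eqref{eq:puresumeqn}. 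Hence the symmetric-square $L$-value cancels identically and the coefficient of $\lambda_g(n)$ becomes a purely arithmetic quantity $c(N,d;g)$ that is multiplicative over the primes of $N$.

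It remains to check $c(N,d;g)=1$ when $d=N$ and $c(N,d;g)=0$ when $d\ne N$. By multiplicativity this reduces to a prime-by-prime identity: for each prime $p\mid N$ with $v_p(N)=a$, the sum over the $p$-part of $M'$ is a telescoping $\mu$-sum of length at most two (since $\mu(p^{a-j})$ is supported on $j\in\{a-1,a\}$), which must vanish when $v_p(d)<a$ and, when $v_p(d)=a$, must combine with the $p$-part of the weight $M\prod_{p^2\mid M}(p^2/(p^2-1))^{-1}$ and the residual local factors from $\omega_g$ and the $m$-sum to give $1$. This last step is where I expect the real work: for $p\,\|\,N$ the old-class at $p$ is two-dimensional and the calculation is essentially that of \cite{ILS}, but for $p^a\,\|\,N$ with $a\ge 2$ the old-class has dimension $a-v_p(d)+1$, the Blomer--Mili\'cevi\'c normalization coefficients are substantially more intricate, and both that normalization and the relevant Euler factors change according to whether and how highly $p$ divides $d$ (in particular on whether $\lambda_g(p)=0$). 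Verifying that the inverse-Gram factors, the $p$-part of the $m$-sum, the weight $M\prod_{p^2\mid M}(p^2/(p^2-1))^{-1}$, and the M\"obius factor nevertheless telescope to the claimed answer is the technical heart of the argument, and is precisely where the arbitrary-level case departs from \cite{ILS} and \cite{Rouymi}.
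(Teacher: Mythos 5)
Your outline follows the same broad path as the paper (expand $\Delta_{k,M}$ over the Blomer--Mili\'cevi\'c basis, regroup by the level of the underlying newform, and let a M\"obius-type cancellation kill the old levels while the $m$-sum strips the symmetric-square weight), but as written it has a genuine gap: the step you yourself flag as ``the technical heart'' --- the explicit evaluation of the inverse-Gram factors $\eta_d(M;g)$ and the prime-by-prime verification that $c(N,d;g)=1$ for $d=N$ and $0$ otherwise --- is exactly the content of the theorem, and it is nowhere carried out. In the paper this computation is done concretely: one evaluates $\sum_{d\mid L}\xi_d(1)^2=\prod_{p\mid L,\,p\nmid M}\rho_f(p)^{-1}\prod_{p^2\mid N,\,p^2\nmid M}\frac{p^2}{p^2-1}$ (Lemma \ref{xisimplify}), and combines it with the norm formula \eqref{ILSLemma2.5} and the factorization \eqref{eq:Zfactorization} to obtain the clean intermediate identity of Lemma \ref{DeltamnRelPrime}, namely $\Delta_{k,N}(m,n)=\frac{12}{(k-1)N}\prod_{p^2\mid N}\frac{p^2}{p^2-1}\sum_{LM=N}\sum_{f\in H_k^\star(M)}\frac{Z_N(1,f)}{Z(1,f)}\lambda_f(m)\lambda_f(n)$ for $(mn,N)=1$. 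Once that is in hand, the annihilation of lower levels is not a delicate telescoping of local data at all: after introducing the weighted sums $\Delta^\star_{k,M}(m,n)$, the M\"obius inversion reduces to the trivial identity $\sum_{L\mid N/W}\mu(L)=\delta_{W=N}$ (Proposition \ref{prop:DeltaDelta}), with no interaction with $\lambda_g(p)$, the Gram matrix, or the $m$-sum; those enter only through the already-proved Lemma \ref{DeltamnRelPrime}. Your plan instead asks all of these ingredients to cancel simultaneously in a single multiplicative computation over each prime power $p^a\,\|\,N$, which is both more entangled and precisely the part you have not verified.

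A second, smaller gap: you treat $\sum_{(m,dM')=1}\lambda_g(m^2)/m$ as an Euler product equal to $\zeta(2)^{-1}L(1,\mathrm{sym}^2 g)$ times finite local factors. This series converges only conditionally (its convergence comes from the holomorphy and nonvanishing of $L(s,\mathrm{sym}^2 g)$ at $s=1$), so the rearrangement into an Euler product and the interchange with the outer sums require justification; the paper handles this by replacing $m^{-1}$ with $m^{-s}$, $s>1$, letting $s\to1^+$, and using the identity $Z_N(1,f)\sum_{(m,N)=1}\lambda_f(m^2)m^{-1}=Z(1,f)$ rather than invoking the symmetric-square $L$-value directly. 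In short: the skeleton is right and parallel to the paper, but the decisive computation (the analogue of Lemma \ref{xisimplify} together with the norm bookkeeping) and the convergence argument are missing, and without them the identity is not established.
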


A key part of the proof is a result on weighted sums of products of the Fourier coefficients, which we extract in Lemma \ref{DeltamnRelPrime}. Note that in many cases, the right-hand side of \eqref{eq:puresumeqn} is preferable to the left-hand side, as it is amenable to application of spectral summation formulas such as the Petersson formula (Proposition \ref{prop:petersson}) and can be studied via Kloosterman sums, see Proposition \ref{prop:peterssonondelta}. More generally, this sort of formula has a variety of applications involving the Fourier coefficients of holomorphic cusp forms and $L$-functions. Rouymi uses his basis and formula to study the non-vanishing at the central point of $L$-functions attached to primitive cusp
forms; we elect to apply our formula to generalize~\cite[Theorem 1.1]{ILS} on the
one-level density of families of holomorphic newform $L$-functions by removing the
condition that $N$ must pass to infinity through the square-free integers.

\subsection{The Density Conjecture}

Before stating our results, we introduce the $L$-function $L(s,f)$ associated to
a $f\in H_k^\star(N)$ as the Dirichlet series
\begin{equation} L(s,f)\ = \ \sum_1^\oo\lambda_f(n)n^{-s}. \end{equation}
See Section 3 of \cite{ILS} for the Euler product, analytic continuation, and functional equation
of $L(s,f)$; $L(s,f)$ may be analytically continued to an entire
function on $\C$ with a functional equation relating $s$ to $1-s$. We also need similar results for its symmetric square (see \cite{Delaunay, IK}): \begin{equation} L(s,\operatorname{sym}^2 f) \ = \ L(s, f \otimes f,) L(s, \chi)^{-1}\end{equation} where $\chi$ is the nebentypus of $f$ (we will only consider the case of trivial nebentypus below).


We now assume the Generalized Riemann Hypothesis for $L(s,f)$, and, for technical
reasons, $L(s,\operatorname{sym}^2 f)$ as well as for all Dirichlet $L$-functions (see Remark \ref{rek:GRHassumptions}). Then we may write all nontrivial zeros of
$L(s,f)$ as
\begin{equation} \varrho_f\ = \ \frac12+i\gamma_f. \end{equation}
For any $f\in H_k^\star(N)$, we denote by $c_f$ its analytic conductor; for our family
\begin{equation} c_f\ = \ k^2N. \end{equation}

Towards the definition of the one-level density for our families, we start with \eqref{one level density}, the one-level density for a fixed form $f$; the ordinates $\gamma_f$ are counted with their corresponding multiplicities,
and $\phi(x)$ is an even function of Schwartz class such that its Fourier
transform
\begin{equation} \widehat\phi(y)\ = \ \int_{-\oo}^\oo\phi(x)e^{-2\pi ixy}dx
\end{equation}
has compact support so that $\phi(x)$ extends to an entire function.

Our family $\mathscr F(N)$ is $H_k^\star(N)$, where the level $N$ is our
asymptotic parameter (and $\mathscr F=\cup_{N\geq1}\mathscr F(N)$).
It is worth mentioning that $\lim_{N\to\infty}|H_k^\star(N)| = \infty$. The one-level density is the
expectation of $D_1(f;\phi)$ averaged over our family:
\begin{equation}
  D_1(H_k^\star(N);\phi)\ := \ \frac1{|H_k^\star(N)|}\sum_{f\in H_k^\star(N)}D_1(f;\phi).
\end{equation}


Iwaniec, Luo, and Sarnak  \cite{ILS} prove the Density
Conjecture with the support of $\widehat\phi$ in $(-2,2)$ and as $N$ runs over
square-free numbers. We prove the following theorem with no conditions on how
$N$ tends to infinity; new features emerge from the presence of square factors dividing the level.

\begin{theorem}\label{thm:densitythm}
  Fix any $\phi\in \mathscr{S}(\R)$ with $\supp \widehat{\phi}\subset (-2,2)$. Then,
  assuming the Generalized Riemann Hypothesis for $L(s,f)$ and  $L(s,\operatorname{sym}^2 f)$ for $f \in H_k^\star(N)$ and for all Dirichlet $L$-functions,
  \begin{equation}
    \lim_{N\rightarrow \infty} \frac{1}{\abs{ H_k^\star(N)}}\sum_{f\in H_k^\star(N)}
    D_1(f;\phi)\ = \ \int_{-\infty}^\infty \phi(x)W_1(\mathrm{O})(x)\,dx
  \end{equation} where $W_1(\mathrm{O})(x)=1+\frac{1}{2}\delta_0(x)$; thus the 1-level density for the family $H_k^\star(N)$ agrees only with orthogonal symmetry.

  More generally, under the same assumptions the Density Conjecture holds for the family $H_k^\star(N)$ for any test function
  $\phi(x)$ whose Fourier transform is supported inside $(-u,u)$ with $u  <  2\log(kN) / \log(k^2 N)$.
\end{theorem}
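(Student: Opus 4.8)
The plan is to run the explicit-formula argument of Iwaniec--Luo--Sarnak \cite{ILS}, substituting Theorem~\ref{thm:puresum} wherever their proof relies on a square-free sieving identity. Fix $\phi$ with $\supp\widehat\phi\subset(-u,u)$ and set $L:=\log c_f=\log(k^2N)$, which is the same for every $f\in H_k^\star(N)$. Applying the Weil explicit formula to each $L(s,f)$ and summing over the family --- GRH for $L(s,f)$ placing every $\gamma_f$ on the real line, as in \cite{ILS} --- the archimedean terms contribute $\abs{H_k^\star(N)}\widehat\phi(0)$ plus a quantity that is $o(\abs{H_k^\star(N)})$, and there remains the prime-power sum
\begin{equation}\label{eq:pp}
-\frac{2}{L}\sum_{p}\sum_{\nu\ge1}\frac{\log p}{p^{\nu/2}}\,\widehat\phi\!\lp\frac{\nu\log p}{L}\rp\sum_{f\in H_k^\star(N)}\bigl(\alpha_f(p)^{\nu}+\beta_f(p)^{\nu}\bigr).
\end{equation}
By Deligne's bound $\abs{\alpha_f(p)},\abs{\beta_f(p)}\le1$, the terms of \eqref{eq:pp} with $\nu\ge3$, together with the finitely many primes $p\mid N$, contribute $O(\abs{H_k^\star(N)}/L)$ altogether, so after dividing by $\abs{H_k^\star(N)}$ only $\nu=1$ and $\nu=2$ survive.

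For $\nu=2$ write $\alpha_f(p)^2+\beta_f(p)^2=\lambda_f(p)^2-2$ and invoke the Hecke relation $\lambda_f(p)^2=\lambda_f(p^2)+1$ (legitimate since $(p,N)=1$), so that $\sum_f\bigl(\alpha_f(p)^2+\beta_f(p)^2\bigr)=\sum_f\lambda_f(p^2)-\abs{H_k^\star(N)}$. The ``$-\abs{H_k^\star(N)}$'' piece produces $\frac{2}{L}\abs{H_k^\star(N)}\sum_{p}\frac{\log p}{p}\widehat\phi\!\lp\frac{2\log p}{L}\rp$, which by the prime number theorem equals $\foh\abs{H_k^\star(N)}\phi(0)+O(\abs{H_k^\star(N)}/L)$; combined with the $\widehat\phi(0)$ already isolated this gives $\widehat\phi(0)+\foh\phi(0)=\int_{-\oo}^{\oo}\phi(x)W_1(\mathrm O)(x)\,dx$. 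Thus the theorem reduces to showing that both
\begin{equation}\label{eq:offd}
\frac{1}{L\abs{H_k^\star(N)}}\sum_{p}\frac{\log p}{\sqrt p}\,\widehat\phi\!\lp\frac{\log p}{L}\rp\sum_{f}\lambda_f(p)\qquad\text{and}\qquad\frac{1}{L\abs{H_k^\star(N)}}\sum_{p}\frac{\log p}{p}\,\widehat\phi\!\lp\frac{2\log p}{L}\rp\sum_{f}\lambda_f(p^2)
\end{equation}
tend to $0$ as $N\to\oo$; here the unconditional lower bound $\abs{H_k^\star(N)}\gg_k N^{1-o(1)}$ suffices, and $\abs{H_k^\star(N)}$ cancels out of the final ratio in any case.

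The heart of the matter is the estimation of \eqref{eq:offd}. Insert Theorem~\ref{thm:puresum} with $n=p$ and $n=p^2$ and expand each $\Delta_{k,M}(m^2,p^{j})$ by the Petersson formula for the Blomer--Mili\'cevi\'c basis (Proposition~\ref{prop:petersson}) into its diagonal term $\delta_{m^2=p^{j}}$ plus a Kloosterman--Bessel series. Since $m^2=p$ is impossible and $m^2=p^2$ forces $m=p$, the only surviving diagonal contribution is the term $m=p$ in the $n=p^2$ sum; weighted by $m^{-1}=p^{-1}$ it is $\ll\abs{H_k^\star(N)}\sum_p(\log p)/p^2\ll\abs{H_k^\star(N)}/L$ and hence negligible. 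Everything else is a multiple sum of Kloosterman sums $S(m^2,p^{j};c)$ with $c\equiv0\,(\mathrm{mod}\,M)$ against Bessel functions $J_{k-1}\!\lp 4\pi mp^{j/2}/c\rp$, now wrapped in the outer sums $\sum_{LM=N}\mu(L)M\prod_{q^2\mid M}\!\lp\tfrac{q^2}{q^2-1}\rp^{-1}\sum_{(m,M)=1}m^{-1}$ coming from the arbitrary-level basis. Using $J_{k-1}(x)\ll\min(x^{k-1},x^{-1/2})$ to truncate the $c$-sum to $M\le c\ll mp^{j/2}$, then summing over $p$ first so as to exploit the cancellation in the resulting sums of primes in arithmetic progressions modulo $c$ (one place where GRH for Dirichlet $L$-functions is used, exactly as in \cite{ILS}), and finally estimating the remaining sums over $m$ (damped by $m^{-1}$ and by the Bessel decay, and organized through Lemma~\ref{DeltamnRelPrime}) and over the factorizations $LM=N$, one arrives at a bound for \eqref{eq:offd}.

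The principal obstacle is precisely this multivariable off-diagonal estimate. In the square-free case of \cite{ILS} the inner object is a single Kloosterman sum modulo $N$; here one must keep the square-free part $L$ and the square-full part $M$ of $N$ separated throughout, carry along the arithmetic weights $\mu(L)M\prod_{q^2\mid M}(\tfrac{q^2}{q^2-1})^{-1}$, and --- the genuinely new difficulty --- control the extra average $\sum_{(m,M)=1}m^{-1}\Delta_{k,M}(m^2,\cdot)$, which has no counterpart in \cite{ILS} and which Lemma~\ref{DeltamnRelPrime} is tailored to handle. The resulting bound for \eqref{eq:offd} has the shape $(k^2N)^{\vep}(kN)^{\theta}$, where $\theta$ is linear in $u$ and vanishes at $u=2\log(kN)/\log(k^2N)$, so \eqref{eq:offd} is $o(1)$ precisely when $\supp\widehat\phi\subset(-u,u)$ with $u<2\log(kN)/\log(k^2N)$; the gap from the naive threshold $2$ reflects the usual mismatch between the analytic conductor $c_f=k^2N$, which fixes the scaling in the explicit formula, and the level $N$ and weight $k$, which fix the Kloosterman modulus and the Bessel argument, and it vanishes for fixed $k$ (where $2\log(kN)/\log(k^2N)\to2$). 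The remaining hypothesis, GRH for $L(s,\operatorname{sym}^2 f)$, is used --- again as in \cite{ILS} --- in the finer analysis of the $\nu=2$ contribution and in controlling the passage between harmonically weighted and natural averages, $L(1,\operatorname{sym}^2 f)$ being proportional to the Petersson weights. Assembling the pieces yields $\abs{H_k^\star(N)}^{-1}\sum_{f}D_1(f;\phi)=\widehat\phi(0)+\foh\phi(0)+o(1)=\int_{-\oo}^{\oo}\phi(x)W_1(\mathrm O)(x)\,dx+o(1)$, which is the assertion.
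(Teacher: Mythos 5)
Your overall architecture is the same as the paper's: the ILS explicit-formula expansion reduces the theorem to showing $\sum_{p\nmid N}\Delta^\star_{k,N}(p)\widehat\phi\pez{\tfrac{\log p}{\log R}}\tfrac{2\log p}{\sqrt p\log R}=o(k\varphi(N))$, and this is attacked through Theorem \ref{thm:puresum}, the Petersson formula, and the ILS treatment of Kloosterman--Bessel sums under GRH for Dirichlet $L$-functions. (One harmless deviation: the paper disposes of the $\lambda_f(p^2)$ terms form-by-form via GRH for $L(s,\operatorname{sym}^2f)$ inside \eqref{eq:D=}, rather than by a second family average at $n=p^2$ as you propose.) The genuine gap is in the step you summarize as ``insert Theorem \ref{thm:puresum}, expand each $\Delta_{k,M}(m^2,p^j)$ by Petersson, and estimate.'' The inner sum $\sum_{(m,M)=1}m^{-1}\Delta_{k,M}(m^2,n)$ is only conditionally convergent, and after the Petersson expansion it cannot be estimated with absolute values as you describe: in the oscillatory range $c\ll m\sqrt p$ the Weil bound together with $J_{k-1}(x)\ll x^{-1/2}$ gives, for each fixed $m$, a $c$-sum of size about $(m\sqrt p)^{1/2+\vep}/M$, so the $m$-sum ``damped by $m^{-1}$ and the Bessel decay'' in fact diverges. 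The paper avoids this by splitting $\Delta^\star_{k,N}=\Delta'_{k,N}+\Delta^\infty_{k,N}$ with truncation parameters $L\le X$, $m\le Y$ (taken to be $(kN)^\delta$) and by bounding the tail $\Delta^\infty_{k,N}$ spectrally in Lemma \ref{lem:infest}, via the bound on $R_f(KM;Y)$ --- which is where GRH for $L(s,\operatorname{sym}^2f)$ actually enters, not where you place it.

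Relatedly, the decisive quantitative input that yields support up to $2\log(kN)/\log(k^2N)$ is asserted rather than derived. You claim a bound of the shape $(k^2N)^\vep(kN)^{\theta}$ with $\theta$ linear in $u$ and vanishing at the threshold, but that is not the mechanism. The paper uses the ILS estimate \eqref{eq:Qstarest}, whose exponential factor $2^{-k}$ is available only in the Bessel regime $3z\le k$, i.e.\ $12\pi mP^{1/2}\le kc$; with $m\le Y$ and $c\ge N/X$ this validity condition $12\pi XYP^{1/2}\le kN$ is precisely what produces the restriction $u\le 2(1-2\delta)\log(kN)/\log(k^2N)$. Without first installing the truncations you cannot invoke that estimate at all, and the crude truncation $J_{k-1}(x)\ll\min(x^{k-1},x^{-1/2})$ you propose would stop well short of support $2$. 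So the skeleton is right, but the two ingredients that make the theorem work at this level of generality and support --- the $\Delta'/\Delta^\infty$ decomposition with its GRH-for-$\operatorname{sym}^2$ tail bound, and the correctly set-up appeal to the $2^{-k}$ regime of the Bessel--Kloosterman estimate --- are missing from the proposal.
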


\begin{remark} While \cite{ILS} are also able to split the family by the sign of the functional equation, we are unable to do so. The reason is that for square-free level $N$ the sign of the functional equation, $\epsilon_f$, is given by \begin{equation} \epsilon_f \ = \ i^k \mu(N) \lambda_f(N) N^{1/2} \end{equation} (see equation (3.5) of \cite{ILS}). By multiplying by $\frac12(1 \pm \epsilon_f)$ we can restrict to just the even ($\epsilon_f = 1$) or odd ($\epsilon_f = -1$) forms, at the cost of having an additional $\lambda_f(N)$ factor in the Petersson formula. This leads to involved calculations of Bessel-Kloosterman terms, but these sums can be evaluated well enough to obtain support in $(-2, 2)$. Unfortunately there is no analogue of their equation (3.5) for general level. \end{remark}


\begin{remark}\label{rek:GRHassumptions} We briefly comment on the use of the various Generalized Riemann Hypotheses. First, assuming GRH for $L(s,f)$ yields a nice spectral interpretation of the 1-level density, as the zeros now lie on a line and it makes sense to order them; note, however, that this statistic is well-defined even if GRH fails. Second, GRH for $L(s,\operatorname{sym}^2 f)$ is used to bound certain sums which arise as lower order terms; in \cite{ILS} (page 80 and especially page 88) the authors remark how this may be replaced by additional applications of the Petersson formula (assuming GRH allows us to trivially estimate contributions from each form, but a bound on average suffices). Finally, GRH for Dirichlet $L$-functions is needed when we follow \cite{ILS} and expand the Kloosterman sums in the Petersson formula with Dirichlet characters; if we do not assume GRH here we are still able to prove the 1-level density agrees with orthogonal symmetry, but in a more restricted range.
\end{remark}

\ \\
The structure of the paper is as follows. Our main goal is to prove the formula for sums of Hecke eigenvalues and then use this to compute the one-level density. We begin in \S\ref{sec:preliminaries} with a short introduction of the theory of primitive holomorphic cusp forms, as well as the Petersson trace formula and the basis of Blomer and Mili\'cevi\'c. In \S\ref{sec:deltaform} we find a formula for $\Delta_{k,N}(m,n)$, which we leverage in \S\ref{sec:weightedunweighted} to find a formula for the arithmetically weighted sums, $\Delta^\star_{k,N}(n)$ (see \cite[(2.53)]{ILS}); this is Theorem \ref{thm:puresum}. Using our formula, we find bounds for $\Delta^\star_{k,N}(n)$ in \S\ref{sec:pure}, culminating in the computation of the one-level density in \S\ref{sec:densityconj} (Theorem \ref{thm:densitythm}).


\section{Preliminaries}\label{sec:preliminaries}
In this section we introduce some notation and results to be used throughout, much of which can be found in \cite{IK}.


\subsection{Hecke eigenvalues and the Petersson inner product}

Our setup is classical. Throughout $k, N$ are positive integers, with $k$ even.
Let $S_k(N)$ be the linear space spanned by cusp forms of weight $k$ and trivial
nebentypus which are Hecke eigenforms for the congruence group $\Gamma_0(N)$.
Each $f \in S_k(N)$ admits a Fourier development
\begin{equation}
f(z) \ = \  \sum_{n\geq 1}a_f(n)e(nz),
\end{equation}
where $e(z):=e^{2\pi i z}$ and the $a_f(n)$ are in general complex numbers,
though as we only consider forms with trivial nebentypus, our Fourier coefficients
are real.

It is well known that $S_k(N)$ is a finite-dimensional Hilbert space with respect to the Petersson inner
product
\begin{equation}
\left<f,g\right>_N \ = \  \int_{\Gamma_0(N)\backslash\Hp}f(z)\overline{g(z)}y^{k-2}dxdy,
\end{equation}
where $\Hp$ denotes the upper-half plane $\Hp=\{z\in\mathbb{C} : \Im(z)>0\}$.
Given a form on $\Gamma_0(M)$, it is possible to induce a form on $\Gamma_0(N)$ for
$M\mid N$. We call such forms for which $M<N$ ``old forms''; the basis of ones orthogonal to the space spanned by the forms with $M < N$
which are eigenvalues of the Hecke operators are called the ``new forms'' or ``primitive forms''.  We write the inner product with a subscript $N$ to indicate we are considering $f$ and $g$ as forms on $\Gamma_0(N)$, when perhaps $\langle f,g\rangle_M$ might make sense as well.


Atkin and Lehner \cite{AL} showed that the space $S_k(N)$ has the following canonical
orthogonal decomposition in terms of the newforms described in \S\ref{sec:harmave}: let $H_k^\star(M)$ be a basis of arithmetically normalized Hecke eigenformsforms for the space of newforms of weight $k$ and level $M$ (typically we choose $M$ to be a divisor of $N$). Then
\begin{equation}
S_k(N) \ = \  \bigoplus_{LM=N}\bigoplus_{f\in H_k^\star(M)}S_k(L;f)
\end{equation}
where $S_k(L;f)$ is the linear space spanned by the forms
\begin{equation}
f_{\mid \ell}(z)\ = \ \ell^{\frac{k}{2}}f(\ell z)\quad\text{with } \ell\mid L.
\end{equation}
Though the forms $f_{\mid\ell}(z)$ are linearly independent, they are not
orthogonal.

If $f \in H_k^\star(M)$ then $f$ is an eigenfunction of all Hecke operators
$T_M(n)$, where
\begin{equation}
(T_M(n)f)(z) \ = \  \frac{1}{\sqrt{n}}\sum_{\substack{ad=n\\(a,M)=1}}\lp\frac{a}{d}\rp^{k/2}\sum_{b\pmod d}f\lp\frac{az+b}{d}\rp.
\end{equation}
For a fixed $f\in H_k^\star(M)$, let $\lambda_f(n)$ denote the eigenvalue of
$T_M(n)$; i.e.,
\begin{equation}
T_M(n)f \ = \  \lambda_f(n)f
\end{equation}
for all $n\geq 1$. The Hecke eigenvalues are multiplicative; more precisely, they
satisfy the following identity for any $m,n\geq 1$:
\begin{equation}\label{heckemultiplicativity}
\lambda_f(m)\lambda_f(n)\ = \ \sum_{\substack{d\mid(m,n) \\ (d,M)=1}}\lambda_f(mn/d^2).
\end{equation}
We normalize so that
\begin{equation}
a_f(1) \ = \  1.
\end{equation}
Then $a_f(n)$ and $\lambda_f(n)$ are related by
\begin{equation}
a_f(n) \ = \  \lambda_f(n)n^{(k-1)/2}.
\end{equation}
Deligne showed that the Weil conjectures imply the Ramanujan-Petersson conjecture
for holomorphic cusp forms, and then proved them. As a consequence, for
$f\in H_k^\star(N)$ we have the bound
\begin{equation}
  \left|\lambda_f(n)\right|\ \leq \ \tau(n),
\end{equation}
where $\tau(n)$ is the divisor function, and if $f\in H_k^\star(M)$ and $p\mid M$,
then
\begin{equation}\label{deligne}
  \lambda_f(p)^2 \ = \  \begin{cases}
    \frac{1}{p} &\text{if }p \mid\mid M\\
    0 &\text{if }p^2\mid M,
  \end{cases}
\end{equation}
see, for instance, \cite[(2.8)]{Rouymi}.

We recall the definition~\eqref{eq:psidef} of the normalized Fourier coefficients
$\Psi_f(n)$ attached to any cusp form $f$:
\begin{equation}
\Psi_f(n) \ = \  \left(\frac{\Gamma(k-1)}{(4\pi n)^{k-1}}\right)^{1/2}||f||_N^{-1}a_f(n)
\ \ll_f\ \tau(n).
\end{equation}
Let $\Bb_k(N)$ be an orthogonal basis of $S_k(N)$. Then
\begin{equation}
  \left|\Bb_k(N)\right|\ = \ \dim S_k(N)\ \asymp\ \nu(N)k
\end{equation}
where
\begin{equation}
  \nu(N)\ := \ [\Gamma_0(1):\Gamma_0(N)]\ = \ N\prod_{p\mid N}(1+\tfrac1p).
\end{equation}
From the Atkin-Lehner decomposition, we also deduce
\begin{equation}
  \dim S_k(N)\ = \ \sum_{LM=N}\tau(L)\left|H_k^\star(M)\right|.
\end{equation}
Recall Definition~\eqref{deltapsi} of $\Delta_{k,N}(m,n)$:
\begin{equation}
  \Delta_{k,N}(m,n) \ := \  \sum_{g \in \Bb_k(N)}\overbar{\Psi_g(m)}\Psi_g(n).
\end{equation}
The importance of $\Delta_{k,N}(m,n)$ is established by the Petersson trace formula.
\begin{proposition}[{Petersson~\cite{Petersson}}]\label{prop:petersson}
  For any $m,n\geq1$ we have
  \begin{equation}
    \Delta_{k,N}(m,n)\ = \ \delta(m,n)+2\pi i^k\sum_{c\equiv0\pmod N}c^{-1}S(m,n;c)
    J_{k-1}\left(\frac{4\pi\sqrt{mn}}c\right).
  \end{equation}
\end{proposition}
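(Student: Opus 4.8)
The plan is to follow Petersson's classical argument via Poincaré series. For each $m\ge 1$ introduce the weight-$k$ Poincaré series for $\Gamma_0(N)$,
\[
  P_m(z)\ =\ \sum_{\gamma\in\Gamma_\infty\backslash\Gamma_0(N)}(c_\gamma z+d_\gamma)^{-k}\,e(m\gamma z),
\]
where $\Gamma_\infty$ is the stabilizer of the cusp at $\infty$ and $c_\gamma,d_\gamma$ are the bottom entries of $\gamma$. For $k\ge 3$ this converges absolutely and lies in $S_k(N)$; when $k=2$ (which is permitted here) I would run the usual Hecke trick, inserting a factor $y^s$, proving the identities below for $\Re(s)$ large and analytically continuing to $s=0$. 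The proof then reduces to computing the $n$-th Fourier coefficient $p_m(n)$ of $P_m$ in two different ways.

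For the spectral computation, unfold $\langle f,P_m\rangle_N$ against an arbitrary $f\in S_k(N)$ in the standard way; using $\int_0^\infty e^{-4\pi my}y^{k-2}\,dy=\Gamma(k-1)/(4\pi m)^{k-1}$ one gets $\langle f,P_m\rangle_N=\tfrac{\Gamma(k-1)}{(4\pi m)^{k-1}}a_f(m)$. In particular the $P_m$ span $S_k(N)$ (if $f$ is orthogonal to all of them then every $a_f(m)$ vanishes), so expanding $P_m$ in the orthogonal basis $\Bb_k(N)$ and extracting the $n$-th Fourier coefficient gives
\[
  p_m(n)\ =\ \frac{\Gamma(k-1)}{(4\pi m)^{k-1}}\sum_{g\in\Bb_k(N)}\|g\|_N^{-2}\,\overline{a_g(m)}\,a_g(n),
\]
which, on comparing with the definitions of $\Psi_g$ and $\Delta_{k,N}$, equals $(n/m)^{(k-1)/2}\Delta_{k,N}(m,n)$. (This incidentally shows $\Delta_{k,N}(m,n)$ is independent of the choice of orthogonal basis.)

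For the geometric computation, expand $P_m$ directly from its definition. The identity coset contributes $e(mz)$, hence $\delta(m,n)$. For the remaining cosets, parametrize $\Gamma_\infty\backslash\Gamma_0(N)$ by bottom rows $(c,d)$ with $c>0$, $N\mid c$, $(c,d)=1$; writing $d=\delta+\ell c$ with $\delta$ over reduced residues mod $c$ and $\ell\in\mathbb Z$, the sum over $\ell$ unfolds the period integral $\int_0^1\!\cdots\,dx$ to $\int_{\mathbb R}\!\cdots$, the sum over $\delta$ assembles the Kloosterman sum $S(m,n;c)$, and after the substitutions $u=cz+\delta$ and then $u=\sqrt{m/n}\,w$ one is left with
\[
  p_m(n)-\delta(m,n)\ =\ \Big(\tfrac nm\Big)^{(k-1)/2}\sum_{\substack{c>0\\ N\mid c}}\frac{S(m,n;c)}{c}\int w^{-k}\,e\!\Big(\!-\tfrac{\sqrt{mn}}{c}(w+w^{-1})\Big)\,dw,
\]
the inner integral being taken along a horizontal line in the upper half-plane. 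This integral is a classical contour representation of the Bessel function, equal to $2\pi i^{-k}J_{k-1}(4\pi\sqrt{mn}/c)$ (and $i^{-k}=i^k$ since $k$ is even); I would derive it by shifting the contour, or from the Mellin--Barnes representation of $J_{k-1}$. Equating the two expressions for $p_m(n)$ and cancelling the common factor $(n/m)^{(k-1)/2}$ yields the stated formula; Weil's bound $|S(m,n;c)|\ll c^{1/2+\varepsilon}$ together with $J_{k-1}(x)\ll\min(x^{k-1},x^{-1/2})$ guarantees absolute convergence of the $c$-sum and justifies all rearrangements.

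The argument is classical, so the main work is bookkeeping rather than conceptual: carrying the congruence $N\mid c$ through the coset decomposition that produces the Kloosterman sums, and pinning down the precise constant $2\pi i^k$ and exponent $(k-1)/2$ in the archimedean Bessel integral. The only genuinely extra ingredient beyond the level-one case is this $N\mid c$ condition in the $c$-sum; if the weight $k=2$ case is to be included, the failure of absolute convergence of $P_m$ forces the Hecke-trick detour noted above.
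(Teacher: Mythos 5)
Your proposal is correct in outline, but note that the paper does not prove this proposition at all: it is quoted as the classical trace formula of Petersson (in the modern normalization of, e.g., Iwaniec--Kowalski, Proposition 14.5), and your Poincar\'e-series argument is precisely the standard proof underlying that citation -- unfolding $\langle f,P_m\rangle_N$ for the spectral side, and the coset decomposition by bottom rows $(c,d)$ with $N\mid c$ producing $\delta(m,n)$, the Kloosterman sums, and the Bessel contour integral on the geometric side. The only points needing care are exactly the ones you flag: the Hecke-trick regularization (with holomorphic projection) when $k=2$, the $\pm I$ bookkeeping in $\Gamma_\infty\backslash\Gamma_0(N)$, and pinning down the constant $2\pi i^{-k}=2\pi i^{k}$ and the factor $(n/m)^{(k-1)/2}$ that cancels against the normalization of $\Psi_g$; none of these affects the validity of the argument.
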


Though the quantity $\Delta_{k,N}(m,n)$ is independent of the choice of an orthonormal basis, we would like to
compute with the Petersson trace formula using an explicit basis $\Bb_k(N)$ to
average over newforms. However, as remarked, the spaces $S_k(L;f)$ do not have a
distinguished orthogonal basis. Therefore, to produce a basis $\Bb_k(N)$, we need
a basis for the spaces $S_k(L;f)$. Iwaniec, Luo, and Sarnak~\cite{ILS}
write down an explicit basis when $N$ is square-free. As we will see in the next
section, Blomer and Mili\'cevi\'c~\cite{BM} have recently obtained a basis for
arbitrary level $N$. Our first key idea, a kind of trace formula for sums of
Hecke eigenvalues over newforms in the case $N$ is arbitrary, is an explicit
computation with this new basis. Our second key idea on the one-level density of
the $L$-functions $L(s,f)$ for $f\in H_k^\star(N)$ uses our first key idea in an
essential way to reduce the problem to the one already treated by Iwaniec, Luo, and
Sarnak.

To $H_k^\star(M)$ we often associate $\chi_{0;M}$,
the trivial character $\bmod$ $M$:
\begin{equation}
\chi_{0;M}(n) \ = \  \begin{cases}
			 1 & \text{if } (n,M)=1\\
			 0 & \text{otherwise.}
		  \end{cases}
\end{equation}

\subsection{An orthonormal basis for $S_k(N)$}

For $f \in H_k^\star(M)$ consider the following arithmetic functions, which coincide with the ones defined in~\cite{BM}
up to a few corrections~\cite{BM2}:
\begin{equation}
r_f(c) \ := \ \sum_{b\mid c}\frac{\mu(b)\lambda_f(b)^2}{b\sigma_{-1}^{\rm twisted}(b)^2},\,\, \alpha(c)\ := \ \sum_{b\mid c}\frac{\chi_{0;M}(b)\mu(b)}{b^2},\,\, \beta(c)\ := \ \sum_{b\mid c}\frac{\chi_{0;M}(b)\mu^2(b)}{b},
\end{equation}
where $\mu_f(c)$ is the multiplicative function given implicitly by
\begin{equation}
L(f,s)^{-1} \ = \  \sum_{c}\frac{\mu_f(c)}{c^s},
\end{equation}
or explicitly on prime powers by
\begin{equation}
  \mu_f(p^j)\ = \ \begin{cases}-\lambda_f(p)&j=1 \\ \chi_{0;M}(p)&j=2 \\ 0 & j>2 \end{cases}
\end{equation}
and
\begin{equation}
  \sigma_{-1}^{\rm twisted}(b) \ = \  \sum_{r\mid b}\frac{\chi_{0;M}(r)}{r}.
\end{equation}

For $\ell\mid d$ define
\begin{equation}
\xi'_d(\ell) \ := \  \frac{\mu(d/\ell)\lambda_f(d/\ell)}{r_f(d)^{1/2}(d/\ell)^{1/2}\beta(d/\ell)},\,\, \ \  \xi''_d(\ell) \ := \  \frac{\mu_f(d/\ell)}{(d/\ell)^{1/2}(r_f(d)\alpha(d))^{1/2}}.
\end{equation}
Write $d = d_1d_2$ where $d_1$ is square-free, $d_2$ is square-full, and $(d_1,d_2) = 1$. Thus $p\mid\mid d$ implies $p\mid d_1$ and $p^2\mid d$ implies $p^2\mid d_2$. Then for $\ell \mid d$ define
\begin{equation}
\xi_d(\ell)\ := \  \xi'_{d_1}((d_1, \ell))\xi''_{d_2}((d_2,\ell)).
\end{equation}

We record the following identities; while these are not needed for the arguments in this paper, they were of use in an earlier draft in investigating related problems, and thus may be of use to others. For prime powers, $\xi_d(\ell)$ simplifies as
\begin{equation}\label{eq:identitiesforprooflemmaxisimplify}\begin{aligned}[b]
  \xi_1(1) &\ = \  1,
  & \xi_{p^\nu}(p^\nu) &\ = \  \pez{r_f(p)\pez{1-\chi_{0;M}(p)/p^2}}^{-1/2} \\
  \xi_p(p) &\ = \  r_f(p)^{-1/2},
  & \xi_{p^\nu}(p^{\nu-1}) &\ = \  \tfrac{-\lambda_f(p)}{\sqrt{p}}\xi_{p^\nu}(p^\nu) \\
  \xi_p(1) &\ = \  \tfrac{-\lambda_f(p)}{\sqrt{p}\pez{1 + \chi_{0;M}(p)/p}}\xi_p(p),
  & \xi_{p^\nu}(p^{\nu - 2})&\ = \ \tfrac{\chi_{0;M}(p)}{p}\xi_{p^\nu}(p^\nu), \text{ for } \nu \geq 2.
\end{aligned}\end{equation}

Blomer and Mili\'cevi\'c prove the following.
\begin{proposition}[Blomer and Mili\'cevi\'c {\cite[Lemma 9]{BM}}]\label{ONB}
Let
\begin{equation}
f_d(z)\ := \ \sum_{\ell\mid d}\xi_d(\ell)f\mid_ \ell(z),
\end{equation}
 where $N = LM$ and $f \in H_k^\star(M)$ is Petersson-normalized with respect to the Petersson norm on $S_k(N)$. Then $\{f_d : d\mid L\}$ is an orthonormal basis of $S_k(L;f)$.
\end{proposition}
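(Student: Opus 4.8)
The plan is to reduce the orthonormality of $\{f_d:d\mid L\}$ to a purely local statement at each prime dividing $L$, and then to verify that statement directly from the explicit coefficients. Normalize $\|f\|_N=1$; then $\|f\mid_\ell\|_N=\|f\|_N=1$ for every $\ell$ (see \cite[\S2]{ILS}). First I would record the Petersson inner products $\langle f\mid_\ell,f\mid_m\rangle_N$ of the translates, $\ell,m\mid L$, which span $S_k(L;f)$. As in \cite[\S2]{ILS} (carried out there for square-free level, but the unfolding computation is insensitive to whether $(\ell,M)=1$), $\langle f\mid_\ell,f\mid_m\rangle_N$ is a multiplicative function of the pair $(\ell,m)$ whose value at a prime-power pair depends only on the two exponents, on $\lambda_f(p)$, and on $v_p(M)$ through $\chi_{0;M}(p)$ and the Deligne relations \eqref{deligne}. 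Equivalently, using the adjoint of $g\mapsto g\mid_p$ (which acts on $S_k(L;f)$ as a constant times the Hecke operator $U_p$, with $U_pf$ a scalar multiple of $f$ when $p\mid M$), one sees that $c\mapsto\langle f\mid_{p^c},f\rangle_N$ satisfies, for $c\ge1$, a three-term recurrence governed by the local factor $1-\lambda_f(p)X+\chi_{0;M}(p)X^2$ of $L(s,f)^{-1}$. Consequently the Gram matrix $G=\bigl(\langle f\mid_\ell,f\mid_m\rangle_N\bigr)_{\ell,m\mid L}$ is the tensor product over the primes $p\mid L$ of the local Gram matrices $G^{(p)}=\bigl(\langle f\mid_{p^a},f\mid_{p^b}\rangle_N\bigr)_{0\le a,b\le v_p(L)}$.

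Next I would exploit that $\xi_d(\ell)=\xi'_{d_1}((d_1,\ell))\,\xi''_{d_2}((d_2,\ell))$, where $d=d_1d_2$ is the splitting of $d$ into its square-free and square-full parts: the transition matrix $\Xi=\bigl(\xi_d(\ell)\bigr)_{d,\ell\mid L}$ then likewise factors as $\bigotimes_{p\mid L}\Xi^{(p)}$. Expanding $\langle f_d,f_{d'}\rangle_N=\sum_{\ell,m}\xi_d(\ell)\xi_{d'}(m)\langle f\mid_\ell,f\mid_m\rangle_N$ (all quantities real, the nebentypus being trivial) and inserting both tensor factorizations, the target identity $\langle f_d,f_{d'}\rangle_N=\delta_{d,d'}$ for all $d,d'\mid L$ becomes $\Xi^{(p)}G^{(p)}\bigl(\Xi^{(p)}\bigr)^{\mathsf T}=I$ for each $p\mid L$. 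It therefore suffices to treat $L=p^\nu$.

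For $L=p^\nu$ I would verify orthonormality of $\{f_{p^j}\}_{0\le j\le\nu}$ by hand. When $\nu=1$ (square-free part, governed by $\xi'$) this is a $2\times2$ Gram-matrix computation. When $\nu\ge2$ (square-full part, governed by $\xi''$), the key observation is that $\mu_f(p^j)=0$ for $j>2$, so $f_{p^\nu}$ is supported on only $f\mid_{p^{\nu-2}},f\mid_{p^{\nu-1}},f\mid_{p^\nu}$, with coefficient proportions $1:-\lambda_f(p)/\sqrt p:\chi_{0;M}(p)/p$; since these are precisely the Dirichlet coefficients of the inverse local factor of $L(s,f)$, passing from $f\mid_{p^\nu}$ to $f_{p^\nu}$ subtracts off the part of $f\mid_{p^\nu}$ already accounted for by $f\mid_{p^{\nu-1}}$ and $f\mid_{p^{\nu-2}}$ under the Hecke recurrence, and the orthogonality of $f_{p^\nu}$ to each $f_{p^{\nu'}}$ with $\nu'<\nu$ then falls out of the three-term recurrence of the previous step, the argument being organized as an induction on $\nu$. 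Within each case one further separates $p\nmid M$, $p\mid\mid M$, and $p^2\mid M$ according to the values of $\chi_{0;M}(p)$ and $\lambda_f(p)$ permitted by \eqref{deligne}; the closed forms collected in \eqref{eq:identitiesforprooflemmaxisimplify} are exactly the $\xi$-values these computations return, and the normalizers $r_f(p^\nu)$, $\alpha(p^\nu)$, $\beta(p^\nu)$ are precisely what is needed to make each $\|f_{p^j}\|_N=1$. Finally, since $\dim S_k(L;f)=\tau(L)$ (the $f\mid_\ell$, $\ell\mid L$, being linearly independent) equals the number of divisors $d\mid L$, the $\tau(L)$ orthonormal vectors $\{f_d:d\mid L\}$ are automatically a basis of $S_k(L;f)$.

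The main obstacle is the local inner-product computation at the ramified primes together with the bookkeeping it forces. One has to track how \eqref{deligne} degenerates $G^{(p)}$ when $p\mid M$: if $p^2\mid M$ then $\lambda_f(p)=0$, so the adjoint of $g\mapsto g\mid_p$ kills $f$ and the $f\mid_{p^j}$ are already pairwise orthogonal, whereas if $p\mid\mid M$ then $\lambda_f(p)^2=1/p$; and in each resulting case one must confirm that the formulas for $\xi'$, $\xi''$ and for the normalizers $r_f$, $\alpha$, $\beta$ really do orthonormalize $G^{(p)}$. Once the correct local inner products are established, the rest is routine, if lengthy, linear algebra.
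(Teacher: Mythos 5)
The paper does not prove this proposition: it is imported verbatim from Blomer--Mili\'cevi\'c \cite[Lemma 9]{BM} (together with the corrections \cite{BM2}), so there is no internal proof to compare yours against. That said, your outline is essentially the standard route to the result, and the one followed in the literature (ILS \cite{ILS} for square-free level, Rouymi \cite{Rouymi} for prime-power level, and \cite{BM} in general): compute the Gram matrix $\bigl(\langle f\mid_\ell,f\mid_m\rangle_N\bigr)_{\ell,m\mid L}$ of the translates, note that after normalization it is multiplicative in the pair so that both it and the coefficient matrix $\bigl(\xi_d(\ell)\bigr)$ factor over the primes dividing $L$, and then verify the finite prime-power blocks against the explicit coefficients; combined with $\dim S_k(L;f)=\tau(L)$ this gives the lemma. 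Two caveats. First, your case split is slightly off: at a prime $p$ with $v_p(L)\ge 2$ the local block is not governed by $\xi''$ alone, since the row $d=p$ is built from $\xi'_p$ ($p$ being the square-free part of $d=p$), so the induction must also check the mixed pairings $\langle f_p,f_{p^j}\rangle_N$ for $j\ge 2$; your inductive statement covers this, but the phrasing hides it. Second, the entire substance of the lemma lies in the step you defer: the exact evaluation of $\langle f\mid_{p^a},f\mid_{p^b}\rangle_N/\langle f,f\rangle_N$ at ramified primes (this is where $\sigma_{-1}^{\rm twisted}$ and the relations \eqref{deligne} enter) and the explicit check that $r_f$, $\alpha$, $\beta$ orthonormalize each block. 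Asserting that these normalizers ``are precisely what is needed'' presumes the outcome of that computation rather than performing it, and the computation is delicate --- delicate enough that the published form of \cite[Lemma 9]{BM} required the corrections \cite{BM2}. With those finite local verifications actually carried out, your reduction is sound and yields the proposition.
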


Note that in our application we are not using the Petersson normalization but instead have normalized our forms to have first coefficient 1; thus for us below we have an orthogonal basis which becomes orthonormal upon dividing the forms by their norm.


In addition, we will also use of the following lemma. Originally stated in the
context of square-free level, the same proof holds in general.
\begin{lemma}[Iwaniec, Luo, Sarnak {\cite[Lemma 2.5]{ILS}}] Let $M|N$ and $f\in H_k^\star(M)$. Then
\begin{equation}\label{ILSLemma2.5}
\left<f,f\right>_N \ = \  (4\pi)^{1-k}\Gamma(k)\frac{\nu(N)\varphi(M)}{12M}Z(1,f),
\end{equation}
where $\varphi$ is the Euler totient function, and
\begin{equation}
Z(s,f) \ := \  \sum_{n=1}^\infty\lambda_f(n^2)n^{-s} \ \ {\rm with}\ \ {\rm Res}_{s=1} L(s, f\otimes f) \ = \ Z(1,f) M / \varphi(M).
\end{equation}
\end{lemma}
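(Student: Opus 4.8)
The plan is to derive \eqref{ILSLemma2.5} by comparing two expressions for the residue at $s=1$ of the Rankin--Selberg convolution $L(s,f\otimes f)$ when $f\in H_k^\star(M)$ is viewed inside $S_k(N)$. First I would recall the classical Rankin--Selberg unfolding: for the newform $f$ on $\Gamma_0(M)$, the non-holomorphic Eisenstein series $E(z,s)$ for $\Gamma_0(M)$ against $|f(z)|^2 y^k$ unfolds to a Dirichlet series $\sum_n |a_f(n)|^2 n^{-s-k+1}$ times Gamma factors, and the residue of $E(z,s)$ at its rightmost pole is the constant $1/\mathrm{vol}(\Gamma_0(M)\backslash\Hp)$. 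Taking residues on both sides produces
\begin{equation}
\operatorname{Res}_{s=1} L(s,f\otimes f)\ = \ c_k\,\frac{\langle f,f\rangle_M}{\mathrm{vol}(\Gamma_0(M)\backslash\Hp)}
\end{equation}
for an explicit constant $c_k$ depending only on $k$ (coming from $\Gamma(k)/(4\pi)^{k-1}$ and the normalization $a_f(1)=1$), where the inner product is taken on level $M$. This is the standard computation one finds in \cite{IK}; the only care needed is tracking the arithmetic normalization $\lambda_f$ versus $a_f$ and the precise Euler factors at primes $p\mid M$.

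Second I would handle the change of level from $M$ to $N$. Since $f$ is a fixed function of $z$, the integrand $f(z)\overline{f(z)}y^{k-2}$ is the same; only the domain of integration changes, and $\Gamma_0(N)\backslash\Hp$ is an $\nu(N)/\nu(M)$-sheeted cover of $\Gamma_0(M)\backslash\Hp$ in the sense that $[\Gamma_0(M):\Gamma_0(N)] = \nu(N)/\nu(M)$ (using $\nu(1)=1$ and multiplicativity of the index over the tower). Hence
\begin{equation}
\langle f,f\rangle_N\ = \ \frac{\nu(N)}{\nu(M)}\,\langle f,f\rangle_M,
\end{equation}
and $\mathrm{vol}(\Gamma_0(M)\backslash\Hp) = \nu(M)\,\mathrm{vol}(\Gamma_0(1)\backslash\Hp) = \nu(M)\cdot\frac{\pi}{3}$. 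Combining with the residue formula of the previous paragraph turns $\langle f,f\rangle_N$ into an explicit multiple of $\operatorname{Res}_{s=1} L(s,f\otimes f)$, with the $\nu(M)$ factors from the volume cancelling against those introduced by the level change, leaving $\nu(N)$ in the numerator and a single power of $M$ (or $\varphi(M)/M$) surviving from the bad Euler factors.

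Third I would relate $\operatorname{Res}_{s=1} L(s,f\otimes f)$ to $Z(1,f)$. By Hecke multiplicativity \eqref{heckemultiplicativity} one has the identity of Dirichlet series $L(s,f\otimes f) = \zeta^{(M)}(s)\sum_n \lambda_f(n^2) n^{-s} = \zeta^{(M)}(s) Z(s,f)$ away from the bad primes, where $\zeta^{(M)}$ is the Riemann zeta function with the Euler factors at $p\mid M$ removed; taking residues at $s=1$ gives $\operatorname{Res}_{s=1} L(s,f\otimes f) = Z(1,f)\prod_{p\mid M}(1-p^{-1}) = Z(1,f)\,\varphi(M)/M$, which is exactly the relation asserted in the statement. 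Back-substituting this into the formula from the second paragraph and collecting the $k$-dependent constant as $(4\pi)^{1-k}\Gamma(k)$ (which is where the normalization $\langle\cdot,\cdot\rangle$ with weight $y^{k-2}$ rather than $y^k$ and the factor $\Gamma(k)$ from the Mellin transform of $e^{-4\pi y}y^{k}$ enter) yields \eqref{ILSLemma2.5}.

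The main obstacle is bookkeeping rather than any conceptual difficulty: one must be scrupulous about (i) the precise Euler factors at primes dividing $M$ in the Rankin--Selberg $L$-function — here the bounds \eqref{deligne} on $\lambda_f(p)$ for $p\mid M$ matter and are what make the bad factors collapse to the clean ratio $\varphi(M)/M$ — and (ii) the exact numerical constant, since several conventions (the $y^{k-2}\,dx\,dy$ versus $y^k\frac{dx\,dy}{y^2}$ measure, the placement of $4\pi$, and whether one uses $a_f$ or $\lambda_f$) all feed into it; the cleanest route is to cite the level-$M$ version directly from \cite{IK} and then only carry out the elementary covering-space argument for the passage from $M$ to $N$, exactly as the remark ``the same proof holds in general'' suggests.
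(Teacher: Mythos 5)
The paper itself gives no proof here: it quotes \cite[Lemma 2.5]{ILS} and merely observes that the ILS argument nowhere uses square-freeness of the level, so it carries over verbatim. Your proposal reconstructs precisely that argument, and for the main identity it works: unfolding $\int_{\Gamma_0(M)\backslash\Hp}|f(z)|^2E(z,s)y^{k-2}\,dx\,dy$ gives $\frac{\Gamma(s+k-1)}{(4\pi)^{s+k-1}}\sum_n\lambda_f(n)^2n^{-s}$, the Hecke relation \eqref{heckemultiplicativity} with $m=n$ gives $\sum_n\lambda_f(n)^2n^{-s}=\zeta^{(M)}(s)Z(s,f)$ exactly (for all $n$, so your ``away from the bad primes'' caveat is unnecessary), the residue of $E(z,s)$ is $3/(\pi\nu(M))$, and the index relation $\langle f,f\rangle_N=\frac{\nu(N)}{\nu(M)}\langle f,f\rangle_M$ is correct since indices multiply in the tower $\Gamma_0(N)\leq\Gamma_0(M)\leq\Gamma_0(1)$. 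Assembling these yields $\langle f,f\rangle_N=\frac{\pi\nu(N)}{3}\cdot\frac{\Gamma(k)}{(4\pi)^k}\cdot\frac{\varphi(M)}{M}Z(1,f)=(4\pi)^{1-k}\Gamma(k)\frac{\nu(N)\varphi(M)}{12M}Z(1,f)$, which is \eqref{ILSLemma2.5}. So your route is sound and is the same one the paper implicitly relies on; the only genuine difference is that you supply the computation the paper outsources.

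Two corrections, one of them substantive. With your convention $L(s,f\otimes f)=\sum_n\lambda_f(n)^2n^{-s}$ you derive $\operatorname{Res}_{s=1}L(s,f\otimes f)=Z(1,f)\,\varphi(M)/M$, and you then assert this is ``exactly the relation asserted in the statement''; but the lemma as stated has the reciprocal ratio, $\operatorname{Res}_{s=1}L(s,f\otimes f)=Z(1,f)\,M/\varphi(M)$. These cannot both hold under one convention: the paper normalizes $L(s,f\otimes f)$ through $L(s,\operatorname{sym}^2f)=L(s,f\otimes f)L(s,\chi)^{-1}$, so its Rankin--Selberg $L$-function carries Euler factors at $p\mid M$ (and the $\zeta^{(M)}(2s)$ part) that your naive Dirichlet series does not. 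You must either adopt that normalization and redo the bad-prime bookkeeping for the residue clause, or explicitly flag the convention mismatch rather than claim agreement; note this does not touch \eqref{ILSLemma2.5} itself, since only $Z(1,f)$ appears there and that is all the paper ever uses. Second, a smaller misattribution: the factor $\varphi(M)/M$ in the unfolding comes from the coprimality restriction $(d,M)=1$ in \eqref{heckemultiplicativity}, i.e.\ from the residue of $\zeta^{(M)}(s)$, not from the special values \eqref{deligne} of $\lambda_f(p)$ at ramified primes; \eqref{deligne} is not needed anywhere in this lemma.
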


It is often convenient to work with the local zeta function
\begin{equation}
  Z_N(s,f)\ := \ \sum_{\ell\mid N^\oo}\lambda_f(\ell^2)\ell^{-s}.
\end{equation}
If $f \in H_k^\star(N)$, then we may write $Z(1,f) = \prod_{p}Z(1,f)$ and one deduces from~\eqref{deligne} and ~\eqref{heckemultiplicativity} that the local Euler
factors $Z_p(1,f)$ are given by
\begin{align}
Z_p(1,f) \ = \  \begin{cases}
\left(1+\frac{1}{p}\right)^{-1}\rho_f(p)^{-1} & \text{if }p\nmid N \\
\left(1+\frac{1}{p}\right)^{-1}\left(1-\frac{1}{p}\right)^{-1}&\text{if }p\mid\mid N \\
1 & \text{if } p^2\mid N,
\end{cases}
\end{align}
where $\rho_f(c)$ is the multiplicative function
\begin{equation}\label{eq:rhofdef}
\rho_f(c) \ = \  \sum_{b\mid c}\mu(b)b\lp\frac{\lambda_f(b)}{\nu(b)}\rp^2 \ = \  \prod_{p\mid c}\lp1 - p\lp\frac{\lambda_f(p)}{p+1}\rp^2\rp.
\end{equation}
Assume now that $N=LM$ and $f\in H_k^\star(M)$ is normalized so that $\lambda_f(n) = 1$, and, writing
\begin{equation}
\mathfrak{p}(L,M) \ := \  \prod_{\substack{p^{\beta}\mid\mid L\\ p\mid M}}p^\beta,
\end{equation}
note that
\begin{equation}
Z_{LM/\mathfrak{p}(L,M)}(s,f) \ = \  Z_N(s,f).
\end{equation}
Thus, without loss of generality, we assume for the next calculation that $(L,M) = 1$; in particular, no prime divides both $L$ and $M$. Specializing to $s=1$, we find
\begin{align*}
\frac{MN}{\phi(M)\nu(N)}\prod_{\substack{p\mid L\\ p\nmid M}}\rho_f(p)^{-1}\prod_{p^2\mid M}\lp\frac{p^2-1}{p^2}\rp &\ = \  \prod_{p^2\mid M}\lp 1-\frac{1}{p}\rp^{-1}\lp 1 + \frac{1}{p}\rp^{-1}\prod_{p \mid \mid M}\lp 1-\frac{1}{p}\rp^{-1}\lp 1 + \frac{1}{p}\rp^{-1}
\\& \qquad \times\prod_{\substack{p\mid N\\ p\nmid M}}\lp 1 + \frac{1}{p}\rp^{-1}\prod_{\substack{p\mid L\\ p\nmid M}}\rho_f(p)^{-1}\prod_{p^2\mid M}\lp\frac{p^2-1}{p^2}\rp
\\&\ = \  \prod_{p\mid\mid M}\lp 1 - \frac{1}{p}\rp^{-1}\lp 1 + \frac{1}{p}\rp^{-1}\prod_{p\nmid M \atop p|N}\lp1 + \frac{1}{p}\rp^{-1}\rho_f(p)^{-1}
\\&\ = \  \prod_{p\mid N}Z_p(1,f),
\eeqno
\end{align*}
since $f\in H_k^\star(M)$. We obtain
\begin{equation}\label{eq:Zfactorization}
  \frac{MN}{\varphi(M)\nu(N)}\prod_{\substack{p\mid L \\
  p\nmid M}}\rho_f(p)^{-1} \ = \  Z_{N}(1,f)\prod_{p^2\mid M}\lp\frac{p^2}{p^2-1}\rp.
\end{equation}
We also note that if $p\nmid M$, then $r_f(p)=\rho_f(p)$, and if $p \mid M$, then
$r_f(p) = 1 - \lambda_f(p)^2/p$.


\section{A Formula for $\Delta_{k,N}(m,n)$}\label{sec:deltaform}
In this section we provide the following explicit formula for $\Delta_{k,N}(m,n)$ in terms of Hecke eigenvalues. We start with a generalization of Lemma 2.7 of \cite{ILS} to general $N$.

\begin{lemma}\label{DeltamnRelPrime}
Suppose $(m,N)=1$ and $(n,N)=1$. Then
\begin{equation}
\Delta_{k,N}(m,n) \ = \ \frac{12}{(k-1)N} \prod_{p^2\mid N}\lp\frac{p^2}{p^2-1}\rp\sum_{LM=N} \sum_{f\in H^\star_k(M)}\frac{Z_N(1,f)}{Z(1,f)}\lambda_f(m)\lambda_f(n).
\end{equation}
\end{lemma}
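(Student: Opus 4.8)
The plan is to expand $\Delta_{k,N}(m,n)$ against the explicit orthonormal basis of Blomer--Mili\'cevi\'c (Proposition \ref{ONB}) and carry out the $\ell$-summation. First I would use the Atkin--Lehner decomposition $S_k(N) = \bigoplus_{LM=N}\bigoplus_{f\in H_k^\star(M)} S_k(L;f)$ to write
\begin{equation*}
\Delta_{k,N}(m,n) \ = \ \sum_{LM=N}\sum_{f\in H_k^\star(M)} \sum_{d\mid L} \overbar{\Psi_{f_d}(m)}\Psi_{f_d}(n),
\end{equation*}
where $f_d(z) = \sum_{\ell\mid d}\xi_d(\ell)\, f_{\mid\ell}(z)$ and $f$ is Petersson-normalized on $S_k(N)$. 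Since $(m,N)=1$ and $(n,N)=1$, and $\ell\mid d\mid L\mid N$, the only $\ell$ contributing a nonzero $m$-th or $n$-th Fourier coefficient is $\ell=1$: indeed $f_{\mid\ell}(z) = \ell^{k/2} f(\ell z)$ has $a_{f_{\mid\ell}}(n) = \ell^{k/2} a_f(n/\ell)$, which vanishes unless $\ell\mid n$, forcing $\ell=1$ since $(\ell,n)=1$. Hence $\Psi_{f_d}(n) = \xi_d(1)\,\Psi_f(n)$ (with $f$ normalized on $S_k(N)$), and similarly for $m$, so the inner sum collapses to $|\xi_d(1)|^2 \overbar{\Psi_f(m)}\Psi_f(n)$.

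Next I would compute the sum $\sum_{d\mid L}|\xi_d(1)|^2$. Using $\xi_d(\ell) = \xi'_{d_1}((d_1,\ell))\,\xi''_{d_2}((d_2,\ell))$ with $d=d_1 d_2$ (square-free times square-full), we get $\xi_d(1) = \xi'_{d_1}(1)\,\xi''_{d_2}(1)$, and multiplicativity reduces $\sum_{d\mid L}|\xi_d(1)|^2$ to a product over primes $p\mid L$ of local factors; the identities in \eqref{eq:identitiesforprooflemmaxisimplify} (specifically the formulas for $\xi_p(1)$, $\xi_{p^\nu}(p^\nu)$, and the powers $\xi_{p^\nu}(p^{\nu-j})$) let me evaluate each local sum. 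The outcome should express $\sum_{d\mid L}|\xi_d(1)|^2$ in terms of $r_f$ and the arithmetic functions $\alpha,\beta$; then using the relations $r_f(p) = \rho_f(p)$ for $p\nmid M$ and $r_f(p) = 1-\lambda_f(p)^2/p$ for $p\mid M$, together with \eqref{deligne}, I expect the $\rho_f(p)^{-1}$ factors over $p\mid L$, $p\nmid M$ to emerge, while the primes dividing both $L$ and $M$ behave as in the $\mathfrak{p}(L,M)$ reduction.

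The remaining step is bookkeeping: I must convert from Petersson-normalized $f$ (normalization used in Proposition \ref{ONB}) to the arithmetically normalized $\lambda_f$'s. By \eqref{eq:psidef}, if $f$ is arithmetically normalized then $\Psi_f(n) = (\Gamma(k-1)/(4\pi)^{k-1})^{1/2}\|f\|_N^{-1}\lambda_f(n)$, and $\|f\|_N^2$ is given by Lemma \ref{ILSLemma2.5} as $(4\pi)^{1-k}\Gamma(k)\,\nu(N)\varphi(M)Z(1,f)/(12M)$, so
\begin{equation*}
\overbar{\Psi_f(m)}\Psi_f(n) \ = \ \frac{1}{k-1}\cdot\frac{12M}{\nu(N)\varphi(M)Z(1,f)}\,\lambda_f(m)\lambda_f(n).
\end{equation*}
Multiplying by $\sum_{d\mid L}|\xi_d(1)|^2$ and invoking the factorization \eqref{eq:Zfactorization}, namely $\frac{MN}{\varphi(M)\nu(N)}\prod_{p\mid L,\,p\nmid M}\rho_f(p)^{-1} = Z_N(1,f)\prod_{p^2\mid M}(p^2/(p^2-1))$, should collapse everything to $\frac{12}{(k-1)N}\prod_{p^2\mid N}(p^2/(p^2-1))\cdot \frac{Z_N(1,f)}{Z(1,f)}\lambda_f(m)\lambda_f(n)$ after summing over $LM=N$; one checks $\prod_{p^2\mid N} = \prod_{p^2\mid M}\prod_{p^2\mid L}$ with the square-full part of $L$ handled by the $\xi''$-factors. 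The main obstacle is the local computation of $\sum_{d\mid L}|\xi_d(1)|^2$ at primes where $p\mid L$ and $p\mid M$ simultaneously and at square-full prime powers $p^\nu\|L$: here one cannot assume $(L,M)=1$ and must carefully track the interaction between the $\xi''_{d_2}$ factors, the vanishing \eqref{deligne} of $\lambda_f(p)$ for $p^2\mid M$, and the definition of $\mathfrak{p}(L,M)$ so that the product of local factors matches $Z_N(1,f)/Z(1,f)$ exactly. Everything else is multiplicative bookkeeping and substitution of the stated identities.
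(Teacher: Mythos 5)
Your proposal is essentially the paper's own proof: expand $\Delta_{k,N}(m,n)$ over the Blomer--Mili\'cevi\'c basis, use $(m,N)=(n,N)=1$ to force $\ell=1$, convert norms via Lemma \ref{ILSLemma2.5}, evaluate $\sum_{d\mid L}\xi_d(1)^2$ by multiplicativity (the paper's Lemma \ref{xisimplify} gives $\prod_{p\mid L,\,p\nmid M}\rho_f(p)^{-1}\prod_{p^2\mid N,\,p^2\nmid M}\frac{p^2}{p^2-1}$, computed directly from the definitions of $\xi'_d,\xi''_d$ rather than from \eqref{eq:identitiesforprooflemmaxisimplify}), and finish with \eqref{eq:Zfactorization}. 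One small correction to your bookkeeping: the product over $p^2\mid N$ splits as $\prod_{p^2\mid M}$ times $\prod_{p^2\mid N,\,p^2\nmid M}$ (the latter including primes with $p\mid\mid L$ and $p\mid\mid M$), not as $\prod_{p^2\mid M}\prod_{p^2\mid L}$ --- exactly the $(L,M)\neq 1$ interaction you flagged as the main obstacle.
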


Before we can prove the above lemma, we begin with a result about the coefficients inherited from the orthonormal basis defined in Proposition \ref{ONB}. Note that if $f(z)\in  H_k^\star(M)$ has Fourier expansion
\begin{equation}
f(z) \ = \  \sum_{n\geq 1}a_f(n)e(nz),
\end{equation}
then
\begin{align}
f_d(z)& \ := \  \sum_{\ell\mid d}\xi_d(\ell)f\mid_ \ell(z) \ = \  \sum_{\ell\mid d}\xi_d(\ell)\ell^{k/2}f(\ell z),
\end{align}
so the coefficients of the Fourier expansion of $f_d(z)$ are given by
\begin{equation}
a_{f_d}(n) \ = \  \sum_{\ell\mid(d,n)}\xi_d(\ell)\ell^{k/2}a_f(\tfrac{n}{\ell}).
\end{equation}

Let $N=LM$ and let $f$ be a newform of weight $k$ and level $M$.
Let $f' = f/||f||_N$ so that $f'$ is Petersson-normalized with respect to level $N$ (i.e., $||f'||_N = 1$) and note that
\begin{equation}
a_{f'}(n) \ = \  \frac{\lambda_f(n)n^{(k-1)/2}}{||f||_N}.
\end{equation}
Then by Proposition \ref{ONB}, the set $\{f'_d: d\mid L\}$ is an orthonormal basis
of $S_k(L;f)$. Let
\begin{equation}
\Bb_k(N) \ = \  \bigcup_{LM = N}\bigcup_{f\in H_k^\star(M)}\bigcup_{d\mid L}\{f'_d\}
\end{equation}
be our orthonormal basis for $S_k(N)$. We have
\begin{equation}\label{eq:deltacoeffs}
  \begin{aligned}[b]  
    \Delta_{k,N}(m,n) \ = &\ \sum_{g\in\Bb_k(N)}\overline{\left(\frac{\Gamma(k-1)}{(4\pi m)^{k-1}}\right)^{1/2}||g||_N^{-1}a_g(m)}\left(\frac{\Gamma(k-1)}{(4\pi n)^{k-1}}\right)^{1/2}||g||_N^{-1}a_g(n)
\\& = (4\pi)^{1-k}(mn)^{\tfrac{1-k}{2}}\Gamma(k-1)\sum_{LM=N}\sum_{f \in H_k^\star(M)}\sum_{f'_d:d\mid L}||f'_d||_N^{-2}\overline{a_{f'_d}(m)}a_{f'_d}(n)\\
     \ &\ = (4\pi)^{1-k}(mn)^{\tfrac{1-k}{2}}\Gamma(k-1) \\& \ \ \ \ \ \times \sum_{LM=N}\sum_{f \in H_k^\star(M)}\sum_{d\mid L}\frac{1}{||f||_N^2}\overline{\left(\sum_{\ell \mid (d,m)}\xi_d(\ell)\ell^{k/2}\lambda_f(\tfrac{m}{\ell})(m/\ell)^{(k-1)/2}\right)}\\& \ \ \ \ \ \times\left(\sum_{\ell \mid (d,n)}\xi_d(\ell)\ell^{k/2}a_f(\tfrac{n}{\ell})(n/\ell)^{(k-1)/2}\right)\\
    \ &\ = \frac{12}{(k-1)\nu(N)}\sum_{LM=N}\frac{M}{\varphi(M)}\sum_{f\in H_k^\star(M)}\frac{1}{Z(1,f)} \\
    &\ \qquad \qquad \times \sum_{d\mid L}\left(\sum_{\ell \mid (d,m)}\xi_d(\ell)\ell^{1/2}\lambda_f(\tfrac{m}{\ell})\right)\left(\sum_{\ell \mid (d,n)}\xi_d(\ell)\ell^{1/2}\lambda_f(\tfrac{n}{\ell})\right),
  \end{aligned}
\end{equation}
where the last equality follows from (\ref{ILSLemma2.5}).

We now specialize to $(n,N)=1$ and $(m,N)=1$, as these assumptions simplify the calculations significantly and are sufficient for our application to the one-level density. In particular, as $d|L|N$ and $(m,N) = (n,N) = 1$, $\ell|(d,m)$ implies $\ell = 1$ (and similarly for $\ell|(d,n)$). Thus the previous equation simplifies to
\begin{equation}\label{eq:simpleDelta}
\Delta_{k,N}(m,n) \ = \ \frac{12}{(k-1)\nu(N)}\sum_{LM=N}\frac{M}{\varphi(M)}\sum_{f\in H_k^\star(M)}\frac{\lambda_f(m)\lambda_f(n)}{Z(1,f)}\sum_{d\mid L}\xi_d(1)^2.
\end{equation}
The task is now to understand $$\sum_{d\mid L}\xi_d(1)^2,$$
which we investigate in the following lemma.

\begin{lemma}\label{xisimplify} Let $f$ be as before, with $LM = N$. We have
\begin{equation}
\sum_{d\mid L}\xi_d(1)^2\ =\ \prod_{\substack{p\mid L\\p\nmid M}} \rho_f(p)^{-1}\prod_{\substack{p^2\mid N\\p^2\nmid M}} \frac{p^2}{p^2-1}.
\end{equation}
\end{lemma}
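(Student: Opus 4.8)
\emph{Proof plan.} The plan is to turn the sum into an Euler product and then read off the local factor at each prime $p\mid L$. Recall that $\xi_d(1)=\xi'_{d_1}(1)\,\xi''_{d_2}(1)$, where $d=d_1d_2$ is the splitting of $d$ into its coprime square-free and square-full parts. Since this splitting is multiplicative in $d$, and since $\mu,\lambda_f,\mu_f,r_f,\alpha,\beta$ are multiplicative and the relevant values $r_f(p^j),\alpha(p^j),\beta(p^j)$ are positive (so the square roots in $\xi',\xi''$ cause no trouble), the function $d\mapsto \xi_d(1)^2$ is multiplicative. Hence
\begin{equation*}
\sum_{d\mid L}\xi_d(1)^2\ =\ \prod_{p^{\nu}\|L}\left(\,\sum_{j=0}^{\nu}\xi_{p^j}(1)^2\right),
\end{equation*}
and it suffices to evaluate the inner sum for each $p\mid L$.

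Next I would compute the prime-power values directly from the definitions. One has $\xi_1(1)=1$. For the square-free part, $\xi_p(1)=\xi'_p(1)=-\lambda_f(p)\bigl(r_f(p)^{1/2}\,p^{1/2}\,\beta(p)\bigr)^{-1}$; since $\beta(p)=\sigma_{-1}^{\rm twisted}(p)$ and $r_f(p)=1-\lambda_f(p)^2/\bigl(p\,\sigma_{-1}^{\rm twisted}(p)^2\bigr)$, this collapses to the clean identity $\xi_p(1)^2=r_f(p)^{-1}-1$. For the square-full part, $\xi_{p^j}(1)=\xi''_{p^j}(1)=\mu_f(p^j)\bigl((p^j)^{1/2}(r_f(p^j)\alpha(p^j))^{1/2}\bigr)^{-1}$, which vanishes for $j\geq3$ because $\mu_f(p^j)=0$ there; for $j=2$, using $\mu_f(p^2)=\chi_{0;M}(p)$, $r_f(p^2)=r_f(p)$, and $\alpha(p^2)=1-\chi_{0;M}(p)/p^2$, one gets $\xi_{p^2}(1)^2=\chi_{0;M}(p)\bigl((p^2-1)r_f(p)\bigr)^{-1}$, i.e.\ $0$ when $p\mid M$ and $\bigl((p^2-1)r_f(p)\bigr)^{-1}$ when $p\nmid M$.

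Summing the (finite) series, the local factor at $p$ with $p^{\nu}\|L$ equals $r_f(p)^{-1}$ when $\nu=1$; when $\nu\geq2$ it equals $r_f(p)^{-1}$ if $p\mid M$, and $r_f(p)^{-1}+\bigl((p^2-1)r_f(p)\bigr)^{-1}=r_f(p)^{-1}\cdot p^2/(p^2-1)$ if $p\nmid M$. It then remains to match this with the asserted product, using the relations recorded just before \S\ref{sec:deltaform}. For $p\nmid M$ one has $r_f(p)=\rho_f(p)$, producing a factor $\rho_f(p)^{-1}$, times an extra $p^2/(p^2-1)$ exactly when $p^2\mid L$ — and for $p\nmid M$ the condition $p^2\mid L$ is the same as ``$p^2\mid N$ and $p^2\nmid M$''. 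For $p\mid M$ one has $r_f(p)=1-\lambda_f(p)^2/p$, so \eqref{deligne} gives $r_f(p)^{-1}=p^2/(p^2-1)$ if $p\|M$ (equivalently $p^2\mid N$, $p^2\nmid M$, since $p\mid L$) and $r_f(p)^{-1}=1$ if $p^2\mid M$ (contributing nothing to either product). Collecting these contributions over all $p\mid L$ yields exactly $\prod_{p\mid L,\,p\nmid M}\rho_f(p)^{-1}\prod_{p^2\mid N,\,p^2\nmid M}\tfrac{p^2}{p^2-1}$.

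The only real difficulty is bookkeeping: keeping track of the square-free/square-full split, of the appearances of $\chi_{0;M}(p)$ inside $r_f$, $\alpha$, $\beta$, and above all of the $j=2$ term, which is the unique source of the factor $p^2/(p^2-1)$ attached to primes with $p^2\mid L$. With the three prime-power identities in hand, the remaining case analysis ($p\mid M$ vs.\ $p\nmid M$, and $v_p(L)=1$ vs.\ $v_p(L)\geq2$) is routine.
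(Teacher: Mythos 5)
Your proposal is correct and follows essentially the same route as the paper: exploit multiplicativity to factor $\sum_{d\mid L}\xi_d(1)^2$ into an Euler product over $p^{\nu}\|L$, note that only $j\le 2$ contributes since $\mu_f(p^j)=0$ for $j\ge 3$, and then split into the cases $p\mid M$ versus $p\nmid M$ using $\beta(p)$, $\alpha(p^2)$, $r_f(p)=\rho_f(p)$ or $1-\lambda_f(p)^2/p$, and \eqref{deligne}. Your per-prime bookkeeping (e.g.\ the identity $\xi_p(1)^2=r_f(p)^{-1}-1$) is just a slightly cleaner organization of the same algebra the paper carries out by putting the local factor over a common denominator.
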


\begin{proof} Using the definition of $\xi_d(\ell)$, writing $d=d_1d_2$, where $d_1$ is squarefree and $d_2$ is squarefull and $(d_1,d_2)=1$, we have
\begin{equation}\label{squaressum} \sum_{d\mid L} \xi_d(1)^2 \ = \ \sum_{d\mid L} \frac{\lambda_f(d_1)^2\mu_f(d_2)^2}{r_f(d)d\beta(d_1)^2\alpha(d_2)}.  \end{equation} Recall that $\mu_f(1) = 1, \mu_f(p^2)=\chi_{0;M}(p)$, and $ \mu_f(p^{e_p})=0$ for all $e_p>2$. As all functions in the sum above are multiplicative, we can factor as follows:
\begin{equation}
\sum_{d\mid L} \xi_d(1)^2 \ = \  \prod_{p^{e_p}\| L} \left( 1 + \frac{\lambda_f(p)^2}{r_f(p)p\beta(p)^2} + \frac{\chi_{0;M}(p)(1-\mu(p^{e_p})^2)}{r_f(p^2)p^2\alpha(p^2)}\right). \end{equation} We now break into cases when $(p,M)=1$ and $(p,M)\neq 1$ to remove the $\chi_{0;M}(p)$: \begin{equation}\label{xifact}
\sum_{d\mid L} \xi_d(1)^2 \ = \ \prod_{\substack{p^{e_p}\| L\\(p,M)=1}} \left( 1 + \frac{\lambda_f(p)^2}{r_f(p)p\beta(p)^2} + \frac{1-\mu(p^{e_p})^2}{r_f(p^2)p^2\alpha(p^2)}\right)\prod_{\substack{p^{e_p}\| L\\(p,M)\neq 1}} \left( 1 + \frac{\lambda_f(p)^2}{r_f(p)p\beta(p)^2}\right).
\end{equation}
We can now simplify many of the terms as follows. If $(p,M)=1$, then
\begin{align*}
\beta(p)^2\ = &\ (1+1/p)^2 \\
\alpha(p^2)\ = &\ (1-1/p^2)\\
r_f(p)\ = &\ \rho_f(p).\eeqno
\end{align*} If $(p,M)\neq 1$, we have
\begin{align*}
\beta(p)\ =&\  \alpha(p^2)=1\\
 r_f(p)\ =&\  1-\frac{\lambda_f(p)^2}{p}.\eeqno
\end{align*}
In addition, note that $r_f(p)=r_f(p^2)$. Thus we can write the right hand side of \eqref{xifact} as
\begin{align}\label{eq:xifactjustrightrewritten}
&\ \prod_{\substack{p^{e_p}\| L\\(p,M)=1}} \left( 1 + \frac{\lambda_f(p)^2}{\rho_f(p)p\left(1+\frac{1}{p}\right)^2} + \frac{1-\mu(p^{e_p})^2}{\rho_f(p)p^2\left(1-\frac{1}{p^2}\right)}\right)\prod_{\substack{p^{e_p}\| L\\(p,M)\neq 1}} \left( 1 + \frac{\lambda_f(p)^2}{p\left(1-\frac{\lambda_f(p)^2}{p}\right)}\right).
\end{align}
Recall that because $f\in H_k^\star(M)$,
\begin{equation}
\lambda_f(p)^2 = \begin{cases}
\frac{1}{p} &\text{if }p\| M\\
0 &\text{if }p^2\mid M.
\end{cases}
\end{equation}
Therefore we can rewrite the second product in \eqref{eq:xifactjustrightrewritten}, and obtain
\begin{align}
\sum_{d\mid L} \xi_d(1)^2 \ = &\ \prod_{\substack{p^{e_p}\| L\\(p,M)=1}} \left( 1 + \frac{\lambda_f(p)^2}{\rho_f(p)p\left(1+\frac{1}{p}\right)^2} + \frac{1-\mu(p^{e_p})^2}{\rho_f(p)p^2\left(1-\frac{1}{p^2}\right)}\right)\prod_{\substack{p\mid L\\p\| M}} \left( \frac{p^2}{p^2-1}\right).
\end{align}
We now simplify the first product above as
\begin{align*}
&\ \prod_{\substack{p^{e_p}\| L\\(p,M)=1}} \left( \frac{\rho_f(p)p\left(1+\frac{1}{p}\right)^2(p^2-1) +\lambda_f(p)^2(p^2-1) + p\left(1+\frac{1}{p}\right)^2(1-\mu(p^{e_p})^2)}{\rho_f(p)p\left(1+\frac{1}{p}\right)^2(p^2-1)}\right)\nonumber\\
&\qquad\times \prod_{\substack{p\mid L\\p\| M}} \left( \frac{p^2}{p^2-1}\right)\\
=& \  \prod_{\substack{p^{e_p}\| L\\(p,M)=1}} \left( \frac{\left(p\left(1+\frac{1}{p}\right)^2-\lambda_f(p)^2\right)(p^2-1) +\lambda_f(p)^2(p^2-1) + p\left(1+\frac{1}{p}\right)^2(1-\mu(p^{e_p})^2)}{\rho_f(p)p\left(1+\frac{1}{p}\right)^2(p^2-1)}\right)\nonumber\\
&\qquad\times\prod_{\substack{p\mid L\\p\| M}} \left( \frac{p^2}{p^2-1}\right)\\
=&\ \prod_{\substack{p^{e_p}\| L\\(p,M)=1}} \left( \frac{p\left(1+\frac{1}{p}\right)^2(p^2-\mu(p^{e_p})^2)}{\rho_f(p)p\left(1+\frac{1}{p}\right)^2(p^2-1)}\right)\prod_{\substack{p\mid L\\p\| M}} \left( \frac{p^2}{p^2-1}\right)
\\
=&\prod_{\substack{p\mid L\\p\nmid M}} \rho_f(p)^{-1}\prod_{\substack{p^2\mid N\\p^2\nmid M}} \left( \frac{p^2}{p^2-1}\right),\eeqno
\end{align*}
which completes the proof.
\end{proof}

Combining Lemma~\ref{xisimplify} with equations \eqref{eq:Zfactorization} and \eqref{eq:simpleDelta} yields Lemma~\ref{DeltamnRelPrime}.



\section{An inversion and a change from weighted to pure sums}\label{sec:weightedunweighted}

We now introduce the arithmetically weighted sums, as defined in \cite[(2.53)]{ILS},
\begin{align}
\Delta_{k,N}^*(m,n) \ = \  \sum_{f\in H_k^\star(N)}\frac{\lambda_f(n)\lambda_f(m)Z_N(1,f)}{Z(1,f)}.
\end{align}
This allows us to state one of our main results, which generalizes work of Iwaniec, Luo,
and Sarnak~\cite[Proposition 2.8]{ILS} and Rouymi~\cite[Proposition 2.3]{Rouymi}.
\begin{proposition}\label{prop:DeltaDelta}
Suppose $(m,N)=1$ and $(n,N)=1$. Then
\begin{equation}\label{eq:preinversion}
\Delta_{k,N}(m,n) \ = \  \frac{12}{(k-1)N}\prod_{p^2\mid N}\lp\frac{p^2}{p^2-1}\rp\sum_{LM=N}\sum_{\substack{\ell \mid L^\infty\\ (\ell,M)=1}}\ell^{-1}\Delta_{k,M}^\star(m\ell^2,n)
\end{equation}
and
\begin{equation}\label{eq:postinversion}
\Delta_{k,N}^\star(m,n) \ = \  \frac{k-1}{12}\sum_{LM=N}\mu(L)M\prod_{p^2\mid M}\lp\frac{p^2}{p^2-1}\rp^{-1}\sum_{\substack{\ell\mid L^\infty\\ (\ell,M)=1}}\ell^{-1}\Delta_{k,M}(m\ell^2,n).
\end{equation}
\end{proposition}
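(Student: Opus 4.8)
The plan is to read both displays off Lemma~\ref{DeltamnRelPrime}, using as the only extra ingredient the multiplicative factorization of the local zeta function: for any $f$ and any factorization $N=LM$,
\[
Z_N(s,f)\ =\ \Big(\sum_{\substack{\ell\mid L^\infty\\(\ell,M)=1}}\lambda_f(\ell^2)\,\ell^{-s}\Big)\,Z_M(s,f).
\]
I would prove this by writing an arbitrary $\ell'\mid N^\infty$ as $\ell'=\ell\,\ell_2$, where $\ell$ is the part of $\ell'$ coprime to $M$ (necessarily supported on primes dividing $L$) and $\ell_2$ the part supported on primes dividing $M$, and applying Hecke multiplicativity $\lambda_f(\ell^2\ell_2^2)=\lambda_f(\ell^2)\lambda_f(\ell_2^2)$ from~\eqref{heckemultiplicativity}; at $s=1$ the $\ell$-series converges absolutely since $|\lambda_f(\ell^2)|\le\tau(\ell^2)$ and $\ell$ ranges over integers with support in the fixed finite set of primes dividing $L$.

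To obtain \eqref{eq:preinversion} I would start from Lemma~\ref{DeltamnRelPrime}, substitute the factorization above for $Z_N(1,f)$, interchange the absolutely convergent $\ell$-sum with the finite sum over $f\in H_k^\star(M)$, and use that $(m,N)=1$ forces $(m,\ell)=1$, so that $\lambda_f(m)\lambda_f(\ell^2)=\lambda_f(m\ell^2)$ by~\eqref{heckemultiplicativity}. The inner sum over $f$ is then exactly $\Delta_{k,M}^\star(m\ell^2,n)$, and the prefactor $\tfrac{12}{(k-1)N}\prod_{p^2\mid N}\big(\tfrac{p^2}{p^2-1}\big)$ is already in place.

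To obtain \eqref{eq:postinversion} I would substitute Lemma~\ref{DeltamnRelPrime} at level $M$ and first argument $m\ell^2$ into the claimed right-hand side; this is legitimate because $(m\ell^2,M)=1$ (as $(m,N)=1$ and $(\ell,M)=1$) and $(n,M)=1$. The two arithmetic prefactors $\tfrac{k-1}{12}M\prod_{p^2\mid M}\big(\tfrac{p^2}{p^2-1}\big)^{-1}$ and $\tfrac{12}{(k-1)M}\prod_{p^2\mid M}\tfrac{p^2}{p^2-1}$ are mutually reciprocal, so they cancel and leave the $\mu(L)$. Using $\lambda_f(m\ell^2)=\lambda_f(m)\lambda_f(\ell^2)$ once more and running the zeta factorization in reverse — collapsing $\sum_{\ell\mid L^\infty,(\ell,M)=1}\lambda_f(\ell^2)\ell^{-1}$ against $Z_M(1,f)$ to restore $Z_N(1,f)$ — turns the right-hand side into $\sum_{LM=N}\mu(L)\sum_{L'M'=M}\sum_{f\in H_k^\star(M')}\tfrac{Z_N(1,f)}{Z(1,f)}\lambda_f(m)\lambda_f(n)$. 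Rewriting $\sum_{LM=N}\mu(L)\sum_{L'M'=M}$ as $\sum_{LL'M'=N}\mu(L)$ and summing $\mu(L)$ over $LL'=N/M'$ for each fixed $M'\mid N$ gives $\sum_{d\mid N/M'}\mu(d)=\mathbf{1}[M'=N]$, so only the term $M'=N$ survives and the expression collapses to $\sum_{f\in H_k^\star(N)}\tfrac{Z_N(1,f)}{Z(1,f)}\lambda_f(m)\lambda_f(n)=\Delta_{k,N}^\star(m,n)$.

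No single step is deep; I expect the only delicate point to be the bookkeeping of the coprimality conditions $(\ell,M)=1$, $(m\ell^2,M)=1$, $(n,M)=1$ that license both Hecke multiplicativity and the application of Lemma~\ref{DeltamnRelPrime} at intermediate levels, together with the routine verification that the rearrangement of the $\ell$-series is permissible (it is, by Deligne's bound and the finite prime support). One could alternatively derive \eqref{eq:postinversion} by formally inverting \eqref{eq:preinversion}, but unwinding the nested sums over $\ell\mid L^\infty$ by hand is messier than routing through Lemma~\ref{DeltamnRelPrime}, where the zeta factorization absorbs the combinatorics.
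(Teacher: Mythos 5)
Your proposal is correct and follows essentially the same route as the paper: \eqref{eq:preinversion} is obtained by factoring $Z_N(1,f)$ as the $\ell$-series times $Z_M(1,f)$ inside Lemma~\ref{DeltamnRelPrime} and invoking Hecke multiplicativity, and \eqref{eq:postinversion} by substituting Lemma~\ref{DeltamnRelPrime} at level $M$ into the claimed right-hand side, recombining the zeta factors into $Z_N(1,f)$, and collapsing the resulting double divisor sum with the M\"obius identity $\sum_{d\mid N/W}\mu(d)=\mathbf{1}[W=N]$. Your extra attention to the coprimality bookkeeping and the absolute convergence of the finitely-supported $\ell$-series is consistent with (and slightly more explicit than) the paper's treatment.
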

\begin{proof}[Proof of Proposition~\ref{prop:DeltaDelta}]
We first prove \eqref{eq:preinversion}.
Note the following: $(m,N)=1$, $(n,N) = 1$ and $\ell\mid L^\infty$ imply $(m,M) = 1$, $(n,M)=1$, and $(\ell,m)=1$.

These observations together with Lemma \ref{DeltamnRelPrime} imply
\begin{align*}
\Delta_{k,N}(m,n) &\ = \  \frac{12}{(k-1)N}\prod_{p^2\mid N}\lp\frac{p^2}{p^2-1}\rp\sum_{LM=N} \sum_{f\in H^\star_k(M)}\frac{Z_{L/\mathfrak{p}(L,M)}(1,f)Z_M(1,f)}{Z(1,f)}\lambda_f(m)\lambda_f(n)
\\& \ = \  \frac{12}{(k-1)N}\prod_{p^2\mid N}\lp\frac{p^2}{p^2-1}\rp\sum_{LM=N}\sum_{f\in H_k^\star(M)}\Big(\sum_{\substack{\ell \mid L^\infty \\(\ell,M)=1}}\lambda_f(\ell^2)\ell^{-1}\Big)\frac{Z_M(1,f)}{Z(1,f)}\lambda_f(m)\lambda_f(n)
\\& \ = \  \frac{12}{(k-1)N}\prod_{p^2\mid N}\lp\frac{p^2}{p^2-1}\rp\sum_{LM=N}\sum_{\substack{\ell \mid L^\infty\\ (\ell,M)=1}}\ell^{-1}\Delta_{k,M}^\star(m\ell^2,n). \eeqno
\end{align*}

We are now ready to prove \eqref{eq:postinversion} using M\"obius inversion.
We begin with
\begin{align*}
&\frac{k-1}{12}\sum_{LM=N}\mu(L)M\prod_{p^2\mid M}\lp\frac{p^2}{p^2-1}\rp^{-1}\sum_{\substack{\ell\mid L^\infty\\ (\ell,M)=1}}\ell^{-1}\Delta_{k,M}(m\ell^2,n) \\
& \ = \  \frac{k-1}{12}\sum_{LM=N}\mu(L)M\prod_{p^2\mid M}\lp\frac{p^2}{p^2-1}\rp^{-1}\sum_{\substack{\ell\mid L^\infty\\ (\ell,M)=1}}\ell^{-1}\frac{12}{(k-1)M}\prod_{p^2\mid M}\lp\frac{p^2}{p^2-1}\rp \\
& \qquad\qquad \times\sum_{QW=M}\sum_{f\in H_k^\star(W)}\frac{Z_M(1,f)}{Z(1,f)}\lambda_f(m\ell^2)\lambda_f(n) \\
& \ = \  \sum_{LM=N}\mu(L)\sum_{QW=M}\sum_{f\in H_k^\star(W)}
  \Big(\sum_{\substack{\ell\mid L^\infty\\(\ell,M)=1}}\lambda_f(\ell^2)\ell^{-1}\Big)
  \frac{Z_M(1,f)}{Z(1,f)}\lambda_f(m)\lambda_f(n) \\
& \ = \  \sum_{LM=N}\mu(L)\sum_{QW=M}\sum_{f\in H_k^\star(W)}\frac{Z_N(1,f)}{Z(1,f)}\lambda_f(m)\lambda_f(n).\eeqno
\end{align*}
Let
\begin{equation}
\mathcal{Z}_N(W)\ := \  \sum_{f\in  H_k^\star(W)}\frac{Z_N(1,f)}{Z(1,f)}\lambda_f(m)\lambda_f(n).
\end{equation}
Interchanging orders of summation yields
\begin{equation}
\sum_{LM=N}\mu(L)\sum_{QW=M}\mathcal{Z}_N(W)
  \ = \  \sum_{W\mid N}\mathcal{Z}_N(W)\sum_{L\mid \tfrac{N}{W}}\mu(L)\\
  \ = \   \Delta_{k,N}^\star(m,n),
\end{equation}
as the M\"obius sum vanishes unless $W=N$ and $\mathcal{Z}_N(N) = \Delta_{k,N}^\star(m,n)$.
\end{proof}
One of our primary applications of Proposition~\ref{prop:DeltaDelta} is to
obtain a formula for pure sums of Hecke eigenvalues. We define the pure sum
\begin{equation}
\Delta^\star_{k,N}(n)\ := \ \sum_{f\in H_{k}^\star(N)}\lambda_f(n)
\end{equation}
(not to be confused with $\Delta_{k,N}^\star(m,n)$, which is weighted)
and prove Theorem \ref{thm:puresum} from the introduction, which we restate here for convenience.

\begin{reptheorem}{thm:puresum}
Suppose that $(n,N)=1$. Then
\begin{equation}
\Delta^\star_{k,N}(n)\ = \ \frac{k-1}{12}\sum_{LM=N}\mu(L)M\prod_{p^2\mid M}\lp\frac{p^2}{p^2-1}\rp^{-1}\sum_{(m,M)=1}m^{-1}\Delta_{k,M}(m^2,n).
\end{equation}
\end{reptheorem}

\begin{proof}
We remove the weights in \eqref{prop:DeltaDelta} by summing $m^{-1}\Delta^\star_{k,N}(m^2,n)$ over all $(m,N)=1$. We will need to replace $\sum_{\ell\mid N^\infty} \sum_{(m,N)=1}(\ell m)^{-1}\lambda_f(\ell^2)\lambda_f(m^2)$ with $\sum_{r\ge 1}r^{-1}\lambda_f(r^2)$; some care is required as we do not have absolute convergence. This can be handled replacing $(\ell m)^{-1}$ and $r^{-1}$ by $(\ell m)^{-s}$ and $r^{-s}$, and then taking the limit as $s$ tends to 1 from above. We do not need absolute convergence of the series to justify the limit; it is permissible as the Dirichlet series are continuous in the region of convergence and all sums at $s=1$ exist as the sum of the coefficients grow sub-linearly.


On one side we have
\begin{align*}
\sum_{(m,N)=1}m^{-1}\Delta_{k,N}^\star(m^2,n)&\ = \ \sum_{(m,N)=1}m^{-1}\sum_{f\in H^\star_k(N)} \frac{\lambda_f(m^2)\lambda_f(n)Z_N(1,f)}{Z(1,f)}\\
&\ = \ \sum_{f\in H^\star_k(N)}\frac{\lambda_f(n)}{Z(1,f)}\sum_{(m,N)=1}\sum_{\ell\mid N^\infty}(\ell m)^{-1}\lambda_f(\ell^2)\lambda_f(m^2)\\
&\ = \ \sum_{f\in H^\star_k(N)}\frac{\lambda_f(n)}{Z(1,f)}\sum_{r\ge 1}r^{-1}\lambda_f(r^2)\\
&\ = \ \sum_{f\in H^\star_k(N)}\lambda_f(n)\\
&\ = \ \Delta_{k,N}^\star(n). \eeqno
\end{align*}
On the other hand we have, using \eqref{eq:postinversion}, for $(n,N)=1$,
\begin{align*}
  &\sum_{(m,N)=1}m^{-1}\Delta^*_{k,N}(m^2,n) \\
  &\ = \ \sum_{(m,N)=1}m^{-1}\frac{k-1}{12}\sum_{LM=N}\mu(L)M\prod_{p^2\mid M}\lp\frac{p^2}{p^2-1}\rp^{-1}\sum_{\substack{\ell\mid L^\infty\\(\ell,M)=1}}\frac{1}{\ell}\Delta_{k,M}((m\ell)^2,n) \\
  &\ = \ \frac{k-1}{12}\sum_{LM=N}\mu(L)M\prod_{p^2\mid M}\lp\frac{p^2}{p^2-1}\rp^{-1}\sum_{(m,N)=1}\sum_{\substack{\ell\mid L^\infty\\(\ell,M)=1}}\frac{1}{m\ell}\Delta_{k,M}((m\ell)^2,n) \\
  &\ = \ \frac{k-1}{12}\sum_{LM=N}\mu(L)M\prod_{p^2\mid M}\lp\frac{p^2}{p^2-1}\rp^{-1}\sum_{(m,M)=1}\frac{1}{m}\Delta_{k,M}(m^2,n).\eeqno
\end{align*}
This completes the proof.
\end{proof}


\section{Estimating tails of pure sums}\label{sec:pure}

One might inquire about the convergence of the innermost sum in
Theorem \ref{thm:puresum}. It is assured by the holomorphy of
$L(s,\operatorname{sym}^2 f)$, but is not absolute (see~\cite[p.\,79]{ILS} for a
full discussion).
For this reason, following~\cite[\S2]{ILS}, we begin our work towards the
Density Conjecture by splitting
\begin{equation}\label{spliteq}
\Delta_{k,N}^\star(n) \ = \  \Delta_{k,N}'(n)+\Delta_{k,N}^\infty(n)
\end{equation}
where
\begin{equation}
\Delta_{k,N}'(n)\ := \ \frac{k-1}{12}\sum_{\substack{LM=N\\L\leq X}}\mu(L)M\prod_{p^2\mid M}\lp\frac{p^2}{p^2-1}\rp^{-1}\sum_{\substack{(m,M)=1\\m\leq Y}}m^{-1}\Delta_{k,M}(m^2,n),
\end{equation}
and $\Delta_{k,N}^\infty(n)$ is the complementary sum.
Here $X, Y \geq 1$ are free parameters.

We consider sequences $\A = \{a_q\}$ that satisfy
\begin{equation}\label{sequenceprop}
\sum_{(q,nN)=1}\lambda_f(q)a_q \ \ll \  (nkN)^\varepsilon
\end{equation}
for all $f\in  H_k^\star(M)$ with $M\mid N$ such that the implied constant depends
only on $\varepsilon$.
The sequence we need for our application, given by
\begin{equation}\label{eq:aqsec}
  a_q\ = \ p^{-1/2}\log p\qquad\text{if }q=p\leq Q,\end{equation}
and $a_q=0$ elsewhere, satisfies this property provided $\log Q\ll\log kN$, assuming GRH for $L(s,f)$;
see~\cite[p.\,80]{ILS} for more details. The lemma below is a straightforward modification of Lemma 2.12 of \cite{ILS} to the case of general $N$.

\begin{lemma}\label{lem:infest}
Suppose $(n,N)=1$ and that $\A$ satisfies \eqref{sequenceprop}. Then
\begin{equation}
\begin{split}
\sum_{(q,nN)=1}\Delta_{k,N}^\infty(nq)a_q&\ \ll \  kN(X^{-1} + Y^{-1/2})(nkNXY)^\varepsilon.
\end{split}
\end{equation}
\end{lemma}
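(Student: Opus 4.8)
The plan is to substitute the expansion of $\Delta_{k,N}^\infty(nq)$ from the splitting \eqref{spliteq} and estimate the resulting sums directly using the Petersson formula. Recall $\Delta_{k,N}^\infty(nq)=\Delta_{k,N}^\star(nq)-\Delta_{k,N}'(nq)$, so by Theorem \ref{thm:puresum} it equals
\[
\frac{k-1}{12}\sum_{LM=N}\mu(L)M\prod_{p^2\mid M}\lp\frac{p^2}{p^2-1}\rp^{-1}\Big(\sum_{\substack{(m,M)=1}}-\sum_{\substack{(m,M)=1\\L\le X,\, m\le Y}}\Big)m^{-1}\Delta_{k,M}(m^2,nq),
\]
so the complementary sum ranges over pairs $(L,m)$ with $L>X$ or $m>Y$. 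First I would interchange the $q$-summation with the sums over $L$, $M$, $m$, and then apply the Petersson trace formula (Proposition \ref{prop:petersson}) to $\Delta_{k,M}(m^2,nq)$, splitting into the diagonal term $\delta(m^2,nq)$ and the Bessel--Kloosterman term.

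For the diagonal contribution, $\delta(m^2,nq)$ forces $q$ into a thin set (roughly $q$ such that $nq$ is $m^2$ up to the constraints), and since the $a_q$ are bounded by the Deligne-type estimates implicit in \eqref{sequenceprop} one gets a contribution that is easily $\ll (nkN)^\varepsilon$ times the relevant range restrictions, hence absorbed into the stated bound. The substantive part is the Bessel--Kloosterman term: after inserting $2\pi i^k\sum_{c\equiv 0(M)}c^{-1}S(m^2,nq;c)J_{k-1}(4\pi m\sqrt{nq}/c)$, I would open the Kloosterman sum, bring the $q$-sum inside, and use hypothesis \eqref{sequenceprop} on $\A$ — this is exactly the point of assuming GRH for $L(s,f)$, which controls $\sum_{(q,nN)=1}\lambda_f(q)a_q$; one relates the $q$-average of $S(m^2,nq;c)$ weighted by $a_q$ and $J_{k-1}$ to such character sums (following the manipulation in \cite[Lemma 2.12]{ILS}, where the Kloosterman sum is expanded additively and one exploits cancellation in $q$). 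Then one sums trivially over $c\equiv 0\pmod M$ using $J_{k-1}(x)\ll \min(x^{k-1},x^{-1/2})$, over $m$, and over $L,M$ with $LM=N$; the restriction $L>X$ contributes the factor $X^{-1}$ (from $\sum_{L>X}1/L$-type tails weighted against $M=N/L$) and the restriction $m>Y$ contributes $Y^{-1/2}$ (from $\sum_{m>Y}m^{-1}\cdot m^{-1/2}$-type tails after the Bessel decay in $m$), yielding the claimed $kN(X^{-1}+Y^{-1/2})(nkNXY)^\varepsilon$.

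The main obstacle I anticipate is bookkeeping the two-variable tail $\{L>X\}\cup\{m>Y\}$ cleanly through the Petersson expansion while keeping the dependence on $N$ linear (the prefactor $\frac{k-1}{12}M\asymp kN/L$, combined with $\nu(M)$-type losses from the number of $c\equiv 0\pmod M$ and divisor bounds, must not degrade to $N^{1+\varepsilon}$ worse than stated), and ensuring the convergence issue flagged after \eqref{spliteq} is handled — since the innermost $m$-sum is only conditionally convergent, I would either truncate and estimate the genuine tail via partial summation against the holomorphy of $L(s,\operatorname{sym}^2 f)$ (as in \cite[p.\,79]{ILS}), or work with $m^{-s}$ and pass to the limit $s\to 1^+$ as was done in the proof of Theorem \ref{thm:puresum}. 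Apart from that, the argument is a routine modification of \cite[Lemma 2.12]{ILS}: the only genuinely new feature is carrying the divisor sum $\sum_{LM=N}$ and the square-full correction factors $\prod_{p^2\mid M}(p^2/(p^2-1))^{-1}=O(1)$ along for the ride, neither of which affects the final exponents.
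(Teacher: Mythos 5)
Your overall plan goes in a genuinely different direction from the paper, and as written it has a real gap. The paper's proof of Lemma \ref{lem:infest} never applies the Petersson formula: it stays on the spectral side. Using Theorem \ref{thm:puresum} and regrouping via Lemma \ref{DeltamnRelPrime}, the complementary sum is rewritten as
\begin{equation*}
\Delta_{k,N}^\infty(nq)\ =\ \sum_{\substack{KLM=N\\ L>X}}\mu(L)\sum_{f\in H_k^\star(M)}\lambda_f(nq)\ +\ \sum_{\substack{KLM=N\\ L\le X}}\mu(L)\sum_{f\in H_k^\star(M)}\lambda_f(nq)\,R_f(KM;Y),
\end{equation*}
with $R_f(KM;Y)=\frac{Z_{KM}(1,f)}{Z(1,f)}\sum_{(m,KM)=1,\,m>Y}m^{-1}\lambda_f(m^2)$. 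The bound $R_f(KM;Y)\ll Y^{-1/2}(kKMY)^\varepsilon$ comes from GRH for $L(s,\operatorname{sym}^2 f)$; multiplicativity gives $\lambda_f(nq)=\lambda_f(n)\lambda_f(q)$, the hypothesis \eqref{sequenceprop} controls $\sum_{(q,nN)=1}\lambda_f(q)a_q$, Deligne controls $\lambda_f(n)$, and $|H_k^\star(M)|\ll k\varphi(M)$ plus trivial summation over divisors gives $kN(X^{-1}+Y^{-1/2})(nkNXY)^\varepsilon$. In particular, the $X^{-1}$ saving comes from $M\le N/X$ when $L>X$, and the $Y^{-1/2}$ saving comes from the symmetric-square tail, not from any Bessel decay.

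The gap in your route is that once you apply Proposition \ref{prop:petersson} to $\Delta_{k,M}(m^2,nq)$, the Hecke eigenvalues are gone, so the hypothesis \eqref{sequenceprop} --- which is precisely a statement about $\sum_q\lambda_f(q)a_q$ and is where GRH for $L(s,f)$ enters --- can no longer be invoked; there is no analogous unconditional cancellation statement for $\sum_q a_q\,S(m^2,nq;c)J_{k-1}(4\pi m\sqrt{nq}/c)$, and the character-expansion of Kloosterman sums you cite is ILS's \S 6 treatment of the truncated piece $\Delta'_{k,N}$ (needing GRH for Dirichlet $L$-functions and the condition $3z\le k$, i.e.\ $c\ge N/X$ large), not their Lemma 2.12. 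Worse, in the complementary range $L>X$ the level $M=N/(KL)$ can be very small, so the moduli $c\equiv 0\pmod M$ can be small, the $2^{-k}$ Bessel saving is unavailable, and with $q$ running up to $Q\asymp R^{u}\asymp(k^2N)^u$ the crude Weil/Bessel estimates do not come close to $kN(X^{-1}+Y^{-1/2})(nkNXY)^\varepsilon$; they also do not justify your claimed $Y^{-1/2}$ from ``Bessel decay in $m$,'' nor do they handle the only conditionally convergent full $m$-sum occurring when $L>X$ (your closing remark about using the holomorphy of $L(s,\operatorname{sym}^2 f)$ or an $m^{-s}$ regularization is the right instinct, but it is exactly the spectral-side argument, and it should be the backbone of the proof rather than a patch on a Kloosterman-sum computation). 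The fix is to abandon the geometric side here and argue as above; the Petersson formula is reserved for the truncated sum $\Delta'_{k,N}(n)$ in Proposition \ref{prop:peterssonondelta}.
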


\begin{proof} Suppose $(q,nN)=1$. We use Theorem \ref{thm:puresum} and the various definitions of the sums of the $\lambda_f$'s to obtain the expansion for the complementary sum $\Delta_{k,N}^\infty(nq)$, and then sum this weighted by $a_q$ over $q$ relatively prime to $nN$. We simplify some of the resulting sums by grouping them with Lemma \ref{DeltamnRelPrime}. Thus
\begin{align*}
\Delta_{k,N}^\infty(nq)\ = \ &  \sum_{\substack{KLM=N\\ L>X}}\mu(L)\sum_{f\in  H_k^\star(M)}\lambda_f(nq)
\\& \qquad\qquad+ \sum_{\substack{KLM=N\\ L\leq X}}\mu(L)\sum_{f\in H_k^\star(M)}\lambda_f(nq)R_f(KM;Y)\eeqno
\end{align*}
where
\begin{equation}
R_f(KM;Y)\ := \  \frac{Z_{KM}(1,f)}{Z(1,f)}\sum_{\substack{(m,KM)=1\\m>Y}}m^{-1}\lambda_f(m^2).
\end{equation}
By the Riemann Hypothesis for $L(s, \operatorname{sym^2}(f))$ we have
\begin{equation}
R_f(KM;Y)\ \ll \  Y^{-1/2}(kKMY)^\varepsilon.
\end{equation}
Combining this fact with the Deligne bound for $\left|\lambda_f(n)\right|$, we have
\begin{align*}
\sum_{(q,nN)=1}\Delta_{k,N}^\infty(nq)a_q \ = \ &  \sum_{\substack{KLM=N\\L>X}}\mu(L)\sum_{f\in H_k^\star(M)}\lambda_f(n)\sum_{(q,nN)=1}\lambda_f(q)a_q
\\& + \sum_{\substack{KLM=N\\L\leq X}}\mu(L)\sum_{f\in H_k^\star(M)}\lambda_f(n)R_f(KM;Y)\sum_{(q,nN)=1}\lambda_f(q)a_q
\\ \ \ll \  &  \sum_{\substack{KLM=N\\L>X}}|\mu(L) H_k^\star(M)|\tau(n)(nkN)^\varepsilon
\\& +\sum_{\substack{KLM=N\\ L\leq X}}|\mu(L) H_k^\star(M)|\tau(n)Y^{-1/2}(kKMY)^\varepsilon(nkN)^\varepsilon
\\ \ \ll \  & \sum_{\substack{KLM=N\\L>X}}|\mu(L)|\lp\frac{k-1}{12}\rp \varphi(M)\tau(n)(nkN)^\varepsilon
\\&+\sum_{\substack{KLM=N\\ L\leq X}}|\mu(L)|\lp\frac{k-1}{12}\rp\varphi(M)\tau(n)Y^{-1/2}(kKMY)^\varepsilon(nkN)^\varepsilon
\\ \ \ll \   & \sum_{\substack{KLM=N\\L>X}}k\tfrac{N}{X}(nkN)^\varepsilon +\sum_{\substack{KLM=N\\ L\leq X}} kNY^{-1/2}(kKMY)^\varepsilon(nkN)^\varepsilon
\\ \ \ll \  &\  kN(X^{-1} + Y^{-1/2})(nkNXY)^\varepsilon. \eeqno
\end{align*}
This establishes the lemma.
\end{proof}

We now substitute the Petersson formula (Proposition~\ref{prop:petersson}) for
each instance of $\Delta_{k,M}(m^2,n)$ to obtain an exact formula for
$\Delta_{k,N}'(n)$ in terms of Kloosterman sums; this is a generalization of Proposition 2.12 of \cite{ILS}.

\begin{proposition}\label{prop:peterssonondelta}
Suppose $(n,N) = 1$. Then
\begin{align*}
  \Delta_{k,N}'(n)
  \ = \ &\ \delta_{Y}(m^2,n)\frac{k-1}{12}n^{-1/2}\sum_{\substack{LM=N\\L\leq X}}\mu(L)M\prod_{p^2\mid M}\lp\frac{p^2}{p^2-1}\rp^{-1} \\
   &\ +\frac{k-1}{12}\sum_{\substack{LM=N\\L\leq X}}
   \mu(L)M\prod_{p^2\mid M}\lp\frac{p^2}{p^2-1}\rp^{-1} \\
   &\qquad\qquad\times\sum_{\substack{(m,M)=1\\m\leq Y}}m^{-1}2\pi i^k
   \sum_{c\equiv 0 \pmod M}c^{-1}S(m^2,n;c)J_{k-1}\lp\frac{4\pi m\sqrt{n}}{c}\rp,
\end{align*}
where
\begin{equation}
\delta_Y(m^2,n) \ = \  \begin{cases}
1&\text{{\rm if} }n=m^2\text{ {\rm and} } m\leq Y,\\
0 &\text{{\rm otherwise}.}
\end{cases}
\end{equation}
\end{proposition}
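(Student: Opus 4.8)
The plan is to substitute the Petersson trace formula (Proposition~\ref{prop:petersson}) directly into the definition of $\Delta_{k,N}'(n)$ and then simplify the resulting diagonal contribution using the hypothesis $(n,N)=1$. Since the parameters $X$ and $Y$ are fixed and finite, the sums over $L$ with $LM=N$, $L\leq X$ and over $m\leq Y$ are finite; the only infinite sum introduced is the convergent Kloosterman--Bessel series furnished by Proposition~\ref{prop:petersson} for each individual $\Delta_{k,M}(m^2,n)$. Hence, unlike in the proof of Theorem~\ref{thm:puresum}, no delicate limiting or rearrangement argument is needed here.

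First I would apply Proposition~\ref{prop:petersson} at level $M$ with arguments $m^2$ and $n$, using $\sqrt{m^2 n}=m\sqrt n$, to write
\begin{equation*}
\Delta_{k,M}(m^2,n)\ = \ \delta(m^2,n)+2\pi i^k\sum_{c\equiv0\pmod M}c^{-1}S(m^2,n;c)J_{k-1}\!\lp\frac{4\pi m\sqrt n}{c}\rp.
\end{equation*}
Inserting this into the definition of $\Delta_{k,N}'(n)$ and splitting the sum linearly produces a diagonal piece coming from the Kronecker $\delta(m^2,n)$ and an off-diagonal piece which is verbatim the second displayed sum of the proposition. The only care required here is to keep the weights $\tfrac{k-1}{12}$, $\mu(L)M$, and $\prod_{p^2\mid M}\lp\tfrac{p^2}{p^2-1}\rp^{-1}$ attached to the correct pieces.

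It then remains to simplify the diagonal piece, which is the step I expect to contain whatever subtlety there is. The factor $\delta(m^2,n)$ is nonzero only when $n$ is a perfect square and $m=\sqrt n$; in that case $(n,N)=1$ forces $(\sqrt n,N)=1$, hence $(\sqrt n,M)=1$ for every $M\mid N$, so the coprimality condition $(m,M)=1$ in the innermost sum is automatically satisfied and imposes no constraint. Consequently, for each pair $(L,M)$ with $LM=N$ the inner sum $\sum_{(m,M)=1,\,m\leq Y}m^{-1}\delta(m^2,n)$ equals $n^{-1/2}$ when $\sqrt n\in\mathbb{Z}$ and $\sqrt n\leq Y$, and $0$ otherwise; that is, it equals $n^{-1/2}\delta_Y(m^2,n)$, a quantity independent of $M$. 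Pulling this $M$-independent factor out of the sum over $LM=N$ gives exactly the first displayed term of the proposition, and combining the two pieces finishes the proof.
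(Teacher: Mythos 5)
Your proposal is correct and is essentially the paper's own argument: the paper obtains this proposition exactly by substituting the Petersson formula (Proposition~\ref{prop:petersson}) for each $\Delta_{k,M}(m^2,n)$ in the definition of $\Delta_{k,N}'(n)$, with the diagonal term collapsing as you describe. Your extra observation that $(n,N)=1$ makes the constraint $(m,M)=1$ vacuous on the diagonal, so the factor $n^{-1/2}\delta_Y(m^2,n)$ is independent of $M$ and pulls out of the sum over $LM=N$, is precisely the (implicit) point needed to match the stated first term.
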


We recover the bounds for $|H_k^\star(N)|$ given by Martin in ~\cite[Theorem 6(c)]{Martin}, and immediately obtain the following result (for completeness the calculation, which is standard, is given in the appendix).  

\begin{proposition}\label{prop:Hcard}
We have that as $kN\ra\oo$
\begin{equation}
  \frac{k-1}{12}\varphi(N)\prod_p\pez{1-\frac{1}{p^2-p}}+O\pez{(kN)^{2/3}}
  \ \le \  \abs{H_k^\star(N)}
  \ \le \  \frac{k-1}{12}\varphi(N)+O\pez{(kN)^{2/3}}.
\end{equation}
\end{proposition}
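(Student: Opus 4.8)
The plan is to obtain the asymptotic for $\abs{H_k^\star(N)}$ by applying Theorem~\ref{thm:puresum} with $n=1$, which gives an exact identity
\[
  \abs{H_k^\star(N)} \ = \ \Delta_{k,N}^\star(1) \ = \ \frac{k-1}{12}\sum_{LM=N}\mu(L)M\prod_{p^2\mid M}\lp\frac{p^2}{p^2-1}\rp^{-1}\sum_{(m,M)=1}m^{-1}\Delta_{k,M}(m^2,1),
\]
so the whole problem reduces to understanding $\sum_{(m,M)=1}m^{-1}\Delta_{k,M}(m^2,1)$ for each divisor $M$ of $N$. First I would isolate the diagonal: by the Petersson formula (Proposition~\ref{prop:petersson}) the main term of $\Delta_{k,M}(m^2,1)$ is $\delta(m^2,1)$, which is nonzero only for $m=1$, contributing exactly $1$; everything else is the Bessel--Kloosterman tail $2\pi i^k\sum_{c\equiv 0(M)}c^{-1}S(m^2,1;c)J_{k-1}(4\pi m/c)$. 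So the ``main term'' of $\abs{H_k^\star(N)}$ is $\frac{k-1}{12}\sum_{LM=N}\mu(L)M\prod_{p^2\mid M}(p^2/(p^2-1))^{-1}$, and the plan is to show (i) this equals the stated arithmetic constant $\frac{k-1}{12}\varphi(N)\prod_p(1-1/(p^2-p))$ up to an error absorbed in the upper/lower bounds, and (ii) the Bessel--Kloosterman contribution is $O((kN)^{2/3})$.

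For step (i), the sum $\sum_{LM=N}\mu(L)M\prod_{p^2\mid M}(p^2/(p^2-1))^{-1}$ is multiplicative in $N$, so I would evaluate its Euler factor at each prime power $p^e\mid\mid N$: the divisors split as $(L,M)\in\{(1,p^e),(p,p^{e-1}),\dots\}$ but $\mu(L)=0$ unless $L\in\{1,p\}$, giving local factor $p^e\prod_{p^2\mid p^e}(\tfrac{p^2}{p^2-1})^{-1} - p^{e-1}\prod_{p^2\mid p^{e-1}}(\tfrac{p^2}{p^2-1})^{-1}$, which one checks equals $\varphi(p^e)$ when $e=1$ and $p^{e-1}(p-1)\cdot\tfrac{p^2-1}{p^2}\cdot(\text{correction})$ for $e\ge 2$; comparing with the Euler factor of $\varphi(N)\prod_p(1-1/(p^2-p))$, one sees they agree on the square-full part up to bounded ratios, and since only squarefull $p$ contribute a discrepancy, the total multiplicative discrepancy is bounded, so the difference between the true main term and the claimed constant is $O(kN^{2/3})$ or smaller — I would be slightly careful here and present the result as the stated two-sided inequality rather than a clean asymptotic, matching Martin's formulation. (The cleanest route may be to directly compare with the known exact formula for $\dim S_k^{\mathrm{new}}(N)$, but re-deriving it from Theorem~\ref{thm:puresum} keeps the appendix self-contained.)

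For step (ii), the error term, I would bound
\[
  \sum_{LM=N}|\mu(L)|M\prod_{p^2\mid M}\lp\tfrac{p^2}{p^2-1}\rp^{-1}\sum_{(m,M)=1}m^{-1}\,2\pi\sum_{c\equiv 0(M)}c^{-1}|S(m^2,1;c)|\,\bigl|J_{k-1}(4\pi m/c)\bigr|
\]
using the Weil bound $|S(m^2,1;c)|\ll c^{1/2}\tau(c)(m^2,c)^{1/2}$ and the standard Bessel bound $J_{k-1}(x)\ll \min(x^{k-1}, x^{-1/2})$, exactly as in \cite[p.\,88 and \S4]{ILS}; the $c$-sum over multiples of $M$ and the $m$-sum converge once $k$ is bounded away from small values (and for small $k$ the $J_{k-1}(x)\ll x^{k-1}$ near $0$ handles convergence), yielding a bound of size $O((kN)^{2/3})$ after summing the divisor structure over $LM=N$ — here the exponent $2/3$ comes from the usual balancing in the Petersson-formula estimate of dimensions, and I would simply cite that the calculation is identical to the one in \cite{ILS} with $M$ in place of $N$ and then summed trivially over $LM=N$. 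The main obstacle is bookkeeping: keeping the squarefull corrections $\prod_{p^2\mid M}(p^2/(p^2-1))^{-1}$ straight through both the arithmetic identity in step (i) and the divisor sums in step (ii), and making sure the error is genuinely $O((kN)^{2/3})$ uniformly and not merely $O((kN)^{2/3+\varepsilon})$; if an $\varepsilon$ does creep in, I would note that it can be removed exactly as Martin does, or else state the appendix result with the $(kN)^{2/3+\varepsilon}$ and observe it suffices for every application in the paper.
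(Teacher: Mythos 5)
Your step (i) is essentially the paper's own computation: the diagonal of the Petersson formula leaves the arithmetic sum $\eta(N)=\sum_{LM=N}\mu(L)M\prod_{p^2\mid M}(1-1/p^2)$, which is multiplicative, and the two-sided statement comes from $\varphi(N)\prod_p(1-1/(p^2-p))\le\eta(N)\le\varphi(N)$; that part is fine and matches the paper. The genuine gap is in step (ii). The inner sum $\sum_{(m,M)=1}m^{-1}\Delta_{k,M}(m^2,1)$ in Theorem~\ref{thm:puresum} is only \emph{conditionally} convergent (this is stated explicitly at the start of \S\ref{sec:pure}), so the quantity you propose to bound, with absolute values on the Kloosterman sums and Bessel functions, is not finite: in the oscillatory range $c\ll m/k$ one has $J_{k-1}(4\pi m/c)\ll (c/m)^{1/2}$, and together with Weil's bound the $c$-sum contributes roughly $m^{1/2+\varepsilon}/(kM)$ for each $m$, so after the factor $m^{-1}$ the $m$-sum diverges like $\sum_m m^{-1/2+\varepsilon}$. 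No choice of $k$ rescues this, and there is no ``usual balancing'' available because you have no free truncation parameters left; the exponent $2/3$ cannot be produced this way.

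The missing idea is the decomposition \eqref{spliteq}: one truncates to $L\le X$, $m\le Y$, applies the Petersson/Weil/Bessel estimates only to the truncated piece $\Delta'_{k,N}(1)$ (giving an error $O\pez{X^{1/2}Y(kNY)^{\varepsilon}N^{-1/2}k^{-1/3}}$), and controls the complementary piece $\Delta^\infty_{k,N}(1)$ on the \emph{spectral} side via Lemma~\ref{lem:infest}, where the tail $\sum_{m>Y}m^{-1}\lambda_f(m^2)$ is bounded by $Y^{-1/2}(kMY)^{\varepsilon}$ using GRH for $L(s,\operatorname{sym}^2 f)$ — i.e., the large-$m$ range is never pushed through the Petersson formula at all. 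Balancing the three error terms with $X=Y^{1/2}=k^{8/21}N^{3/7}$ is precisely what yields $\Delta^\star_{k,N}(1)=\frac{k-1}{12}\eta(N)+O\pez{(kN)^{2/3}}$, and the restriction $L\le X$ also introduces the relative error $O\pez{\tau(N)N/(\eta(N)X)}$ in the main term, which your untruncated version does not have to worry about but which is the price of making the error analysis legitimate. So your outline needs to be reorganized around this truncation; as written, step (ii) bounds a divergent sum.
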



\section{The Density Conjecture for $H_k^\star(N)$}\label{sec:densityconj}

Fix some $\phi\in \mathscr{S}(\R)$ with $\widehat{\phi}$ supported in $(-u,u)$.
We reprise some basic definitions from the introduction.

To a holomorphic newform $f$, we associate the $L$-function
\begin{equation}
L(s,f)\ = \ \sum_{1}^\infty \lambda_f(n)n^{-s}.
\end{equation}
Assuming the Riemann Hypothesis for $L(s,f)$, we can write its non-trivial zeros as
\begin{equation}
\varrho_f\ = \ \frac{1}{2}+i\gamma_f,
\end{equation}
where $\gamma_f\in \mathbb{R}$.
We are interested in the one-level densities of low-lying zeroes. We recall the
definition of $D_1(f;\phi)$ in~\eqref{one level density}:
\begin{equation}
D_1(f;\phi)\ = \ \sum_{\gamma_f}\phi\pez{\frac{\gamma_f}{2\pi}\log c_f},
\end{equation}
where $c_f$ is the analytic conductor of $f$ which in our case is $k^2N$.
We also introduce a scaling parameter $R$ which we take to satisfy $1 < R \asymp k^2N$.

Iwaniec, Luo, and Sarnak \cite[\S4]{ILS} establish that for $f\in  H_k^\star(N)$,
\begin{equation}\label{eq:D=}
D_1(f;\phi)\ = \ E(\phi)-P(f;\phi)+O\pez{\frac{\log \log kN}{\log R}}
\end{equation}
where
\begin{equation}
E(\phi)\ = \ \widehat{\phi}(0)+\frac{1}{2}\phi(0)
\end{equation}
and
\begin{equation}
P(f;\phi)\ = \ \sum_{p\nmid N}\lambda_f(p)\widehat{\phi}\pez{\frac{\log p}{\log R}}\frac{2\log p}{\sqrt{p}\log R}.
\end{equation}
Note that their argument does not depend on $N$ being square-free.
The Density Conjecture concerns the average over $ H_k^\star(N)$, so we consider the
sum
\begin{equation}
\mathscr{B}_k^\star(\phi)\ = \ \sum_{f\in  H_k^\star(N)} D_1(f;\phi).
\end{equation}
Although our main goal is to investigate the behavior as $N\rightarrow\infty$, we keep the notation $\mathscr{B}_k^\star(\phi)$ to remain consistent with \cite{ILS}.
Substituting \eqref{eq:D=} into the above we find that
\begin{equation}
\mathscr{B}_k^\star(\phi)\ = \ \abs{H_k^\star(N)}E(\phi)-\sP^\star_k(\phi)+O\pez{\abs{H_k^\star(N)}\frac{\log\log kN}{\log R}}
\end{equation}
where
\begin{equation}
\sP_k^\star(\phi)\ = \ \sum_{p\nmid N} \Delta^\star_{k,N}(p)\widehat{\phi}\pez{\frac{\log p}{\log R}}\frac{2\log p}{\sqrt{p}\log R}.
\end{equation}
In order to establish that as $kN\rightarrow \infty$ that the main term of
$\mathscr{B}_k^\star(\phi)/\abs{H_k^\star(N)}$ is $E(\phi)$, we need to establish
that $\sP_k^\star(\phi)=o(k\varphi(N))$. This is sufficient because
$\abs{H_k^\star(N)}\asymp k\varphi(N)$, as we showed in Proposition~\ref{prop:Hcard}.

Using the decomposition \eqref{spliteq} we can now write
\begin{equation}\label{eq:Pstar}
\begin{aligned}
\sP_k^\star(\phi)&\ = \ \sum_{p\nmid N} \pez{\Delta'_{k,N}(p)+\Delta^\infty_{k,N}(p)}\widehat{\phi}\pez{\frac{\log p}{\log R}}\frac{2\log p}{\sqrt{p}\log R}.
\end{aligned}
\end{equation}
We first bound
 \begin{equation}
 \sum_{p\nmid N} \Delta^\infty_{k,N}(p)\widehat{\phi}\pez{\frac{\log p}{\log R}}\frac{2\log p}{\sqrt{p}\log R}.
 \end{equation}

Let $a_q$ be as in~\eqref{eq:aqsec} for $q\leq R^u$ and $0$ for $q>R^u$ (the latter is due to
the appearance of $\widehat\phi$, which is zero for $P>R^u$). We see that this sequence satisfies the condition
on $Q$ in the definition~\eqref{eq:aqsec}, and since $\phi$ is of Schwartz class, we
may apply Lemma~\ref{lem:infest} with $X=Y=(kN)^\delta$ for small positive $\delta$
to find
\begin{equation}\label{eq:deltainfs}
 \sum_{p\nmid N} \Delta^\infty_{k,N}(p)\widehat{\phi}\pez{\frac{\log p}{\log R}}\frac{2\log p}{\sqrt{p}\log R}\ \ll \   kN(X^{-1} + Y^{-1/2})(kNXY)^\varepsilon
\ = \ o(k\varphi(N)).
\end{equation}
Next we must estimate the other term from \eqref{eq:Pstar},
\begin{equation}
\mathscr M_k^\star(\phi) \ := \
\sum_{p\nmid N} \Delta'_{k,N}(p)
\widehat{\phi}\pez{\frac{\log p}{\log R}}\frac{2\log p}{\sqrt{p}\log R}.
\end{equation}
To begin, define
\begin{equation}
Q^\star_{k;N}(m;c)\ = \ 2\pi i^k\sum_{p\nmid N} S(m^2,p;c)J_{k-1}\pez{\frac{4\pi m}{c}\sqrt{p}}\widehat{\phi}\pez{\frac{\log p}{\log R}}\frac{2\log p}{\sqrt p\log R}.
\end{equation}
Then we apply Lemma \ref{prop:peterssonondelta} to each instance of $\Delta'_{k,N}$.
Note that the first term in Lemma \ref{prop:peterssonondelta} disappears because $p$
is never a square. Then, moving the initial summation over $p\nmid N$ into the
expression, we can rewrite in terms of $Q_{k;N}^\star(m;c)$:
\begin{equation}
\mathscr M_k^\star(\phi)\ = \
\frac{k-1}{12}\sum_{\substack{LM=N\\L\le X}}\mu(L)M
\prod_{p^2\mid M}\lp\frac{p^2}{p^2-1}\rp^{-1}
\sum_{\substack{(m,M)=1\\m\le Y}}m^{-1}\sum_{c\equiv 0(M)} c^{-1}Q_{k;N}^\star(m;c).
\end{equation}
Iwaniec, Luo, and Sarnak \cite[\S6]{ILS} prove the bound (which still holds for $N$ not square-free)
\begin{equation}\label{eq:Qstarest}
Q_{k;N}^\star(m;c)\ \ll \  \widetilde{\gamma}(z)mP^{1/2}(kN)^\varepsilon( \log 2c)^{-2},
\end{equation}
where $z=4\pi m\sqrt{P}/c$, $P=R^{u'}$ with some $u'<u$, and
$\widetilde{\gamma}(z)=2^{-k}$ if $3z\le k$; this bound appears after their equation (6.17), and uses GRH for Dirichlet $L$-functions (they expand the Kloosterman sums with Dirichlet characters). In order to apply this bound we need to secure $12\pi m P^{1/2}\le kc$, ($3z\leq k$)
so as to satisfy a condition on an estimate for the Bessel function, given in their equation (2.11$'''$).
Noting that $m\le Y$ and $c\ge M\ge N/X$, it suffices to have $12\pi XYP^{1/2}\le kN$.
Taking logarithms, this becomes a condition on $u$, namely
\begin{equation}\label{deltaest}
u\ \le \  \frac{2(1-2\delta)\log(kN)}{\log(k^2N)}.
\end{equation}
For $u$ in this range we can apply the estimate~\eqref{eq:Qstarest} to find
\begin{align*}
  \mathscr M_k^\star(\phi)
  &\ \ll \ \frac{k-1}{12}\sum_{\substack{LM=N\\L\le X}}|\mu(L)|M \\
  &\hspace{1cm}
    \times\prod_{p^2\mid M}\lp\frac{p^2}{p^2-1}\rp^{-1}\sum_{\substack{(m,M)=1\\m\le Y}}
  m^{-1}
  \sum_{c\equiv 0(M)} c^{-1}2^{-k}mP^{1/2}(kN)^\varepsilon( \log 2c)^{-2} \\
  &\ \ll \ \frac{k-1}{12}2^{-k}P^{1/2}(kN)^\varepsilon\sum_{\substack{LM=N\\L\le X}}
  |\mu(L)|M\prod_{p^2\mid M}\lp\frac{p^2}{p^2-1}\rp^{-1}
  \sum_{\substack{(m,M)=1\\m\le Y}}\sum_{c\equiv 0(M)} c^{-1}( \log 2c)^{-2}.\eeqno
\end{align*}
Trivial estimation plus the bound
\begin{equation}
\sum_{c\equiv 0(M)} \frac{1}{c(\log c)^2} \ \ll \  \frac1M
\end{equation}
yields
\begin{equation}\label{eq:deltaprimes}
  \sum_{p\nmid N} \Delta'_{k,N}(p)\widehat{\phi}\pez{\frac{\log p}{\log R}}\frac{2\log p}{\sqrt{p}\log R}\ \ll \  \frac{k-1}{12}2^{-k}P^{1/2}(kN)^\varepsilon XY,
\end{equation}
which is $o(k\varphi(N))$ for $\varepsilon+2\delta<1/2$.

Thus by taking $\delta$ sufficiently small (to satisfy \eqref{deltaest}) and applying the combined estimates for
the completed sums, \eqref{eq:deltainfs} and \eqref{eq:deltaprimes}, we have
established that $\sP^\star_k(\phi)=o(k\varphi(N))$ where the (closed) support of $\widehat\phi$
is contained in $(-u,u)$ and
\begin{equation}\label{eq:supportphi}
u \ < \ \frac{2\log kN}{\log k^2 N},
\end{equation}
which implies Theorem \ref{thm:densitythm}. \hfill $\Box$

\appendix


\section{Estimates for $\abs{H_k^\star(N)}$}\label{sec:cardest}
In this section we recover the main terms of some bounds of
Martin~\cite{Martin} on the cardinality of the set $H_k^\star(N)$.
Since $\lambda_f(1)=1$, we see that $\Delta_{k,N}^\star(1)=\abs{H_k^\star(N)}$. So,
in order to determine the cardinality of $H_k^\star(N)$ it suffices to have an
estimate of $\Delta_{k,N}^\star(1)$.
Taking one term $q=1$ from Lemma~\ref{lem:infest}, we find that
\begin{equation}\label{deltainf}
\Delta_{k,N}^\infty (n)\ \ll \  kN\pez{X^{-1}+Y^{-1/2}}(nkNXY)^\vep.
\end{equation}
Next we turn to evaluating $\Delta_{k,N}'(1)$.
\begin{equation}
\Delta_{k,N}'(n)\ = \ \frac{k-1}{12}\sum_{\substack{LM=N\\L\leq X}}\mu(L)M\prod_{p^2\mid M}\lp\frac{p^2}{p^2-1}\rp^{-1}\sum_{\substack{(m,M)=1\\m\leq Y}}m^{-1}\Delta_{k,M}(m^2,n).
\end{equation}
An application of Weil's bound for Kloosterman sums and a crude bounding of the
Bessel function in the Petersson formula (Proposition~\ref{prop:petersson})
yields an estimate such as~\cite[Corollary 2.2]{ILS}, which we reproduce now.
For any $m,n\ge 1$,
\begin{equation}
\Delta_{k,M}(m,n)\ = \ \delta(m,n)+O\pez{\frac{\tau(M)}{Mk^{5/6}}\frac{(m,n,M)\tau_3((m,n))}{\pez{(m,M)+(n,M)}^{1/2}}\pez{\frac{mn}{\sqrt{mn}+kM}}^{1/2}\log 2mn},
\end{equation}
where the implied constant is absolute. When $n=1$ and $m\le Y$ we find
\begin{equation}
\Delta_{k,M}(m^2,1)\ = \ \delta(m,1)+O\pez{\frac{M^{\varepsilon}m^{1+\varepsilon}}{M^{3/2}k^{4/3}}}.
\end{equation}
In our case this gives that
\begin{equation}\label{deltaprime}
\Delta_{k,N}'(1)\ = \ \frac{k-1}{12}\sum_{\substack{LM=N\\L\leq X}}\mu(L)M\prod_{p^2\mid M}\lp\frac{p^2}{p^2-1}\rp^{-1} +O\pez{\frac{X^{1/2}Y(kNY)^\varepsilon}{N^{1/2}k^{1/3}}
}.\end{equation}
We now turn to the evaluation of
\begin{equation}\label{etais}
  \eta(N)\ := \ \sum_{LM=N}\mu(L)M\prod_{p^2\mid M}\pez{1-\frac{1}{p^2}}.
\end{equation}
Let $M''$ denote the square-full part of $M$ and let $g(M)=M/\zeta_{M''}(2)$, which
is multiplicative. Then $\eta=\mu \star g$ is multiplicative as it is the Dirichlet
convolution of two multiplicative functions and we can compute directly the value
of $\eta$ on prime powers:
\begin{equation}
\eta(p^v)\ = \ \begin{cases}
p\pez{1-\frac{1}{p}}&\text{ if } v=1 \\
p^2\pez{1-\frac{1}{p}-\frac{1}{p^2}}&\text{ if } v=2 \\
p^v\pez{1-\frac{1}{p^2}}\pez{1-\frac{1}{p}}&\text{ if } v>2.
\end{cases}
\end{equation}
It is also useful to establish a bound relating $\eta(N)$ to $\varphi(N)$. By inspection we have that $\eta(N)\le \varphi(N)$. Then as the ratio $\eta(p^v)/\varphi(p^v)$ is minimized when $v=2$ and as
\begin{equation}
\eta(p^2)/\varphi(p^2)\ = \ \frac{p^2-p-1}{p^2-p}\ = \ 1-\frac{1}{p^2-p},
\end{equation}
we find that
\begin{equation}\label{etabounds}
\varphi(N)\prod_p \pez{1-\frac{1}{p^2-p}}\ \le \  \eta(N)\ \le \  \varphi(N).
\end{equation}
Now combining \eqref{deltainf} \eqref{deltaprime} and \eqref{etais}, we find that
\begin{multline}
  \Delta^\star_{k,N}(1)\ = \ \frac{k-1}{12}\eta(N)\pez{1+O\pez{\frac{\tau(N)N}{\eta(N)X}}}
  +O\pez{\frac{X^{1/2}Y(kNY)^\varepsilon}{N^{1/2}k^{1/3}}} \\
  +O\pez{kN\pez{X^{-1}+Y^{-1/2}}(kNXY)^\vep}.
\end{multline}
Taking $X=Y^{1/2}=k^{8/21}N^{3/7}$ then gives
\begin{equation}
\Delta^\star_{k,N}(1)\ = \ \frac{k-1}{12}\eta(N)+O\pez{(kN)^{2/3}},
\end{equation}
which recovers the tight asymptotic bounds given on $\abs{H_k^\star(N)}$ in~\cite[Theorem 6(c)]{Martin}. Combining this with~\eqref{etabounds} we establish the following.
\begin{proposition}[Proposition \ref{prop:Hcard}]
We have that as $kN\ra\oo$
\begin{equation}
  \frac{k-1}{12}\varphi(N)\prod_p\pez{1-\frac{1}{p^2-p}}+O\pez{(kN)^{2/3}}
  \ \le \  \abs{H_k^\star(N)}
  \ \le \  \frac{k-1}{12}\varphi(N)+O\pez{(kN)^{2/3}}.
\end{equation}
\end{proposition}

\end{document}